\theoremstyle{plain}
\newtheorem{teo}{Theorem}[section]
\newtheorem{cor}[teo]{Corollary}
\newtheorem{prop}[teo]{Proposition}
\newtheorem{lem}[teo]{Lemma}
\newtheorem{claim}[teo]{Claim}
\newtheorem{rem}[teo]{Remark}
\newtheorem{defin}[teo]{Definition}
\newtheorem{Q}[teo]{Question}
\newcommand{\system}[1]{\mbox{\fontfamily{cmss}\fontshape{n}\fontseries{m}\selectfont#1}}
\newcommand{\ZF}{\system{ZF}}
\newcommand{\ZFC}{\system{ZFC}}
\newcommand{\AC}{\system{AC}}
\newcommand{\AD}{\system{AD}}
\DeclareMathOperator{\cof}{cof}
\DeclareMathOperator{\crt}{crt}
\DeclareMathOperator{\OD}{OD}
\DeclareMathOperator{\HOD}{HOD}
\DeclareMathOperator{\Ult}{Ult}
\title{LD-algebras beyond I0}
\author{Vincenzo Dimonte\footnote{Universit\`{a} degli Studi di Udine, via delle Scienze, 206 33100 Udine (UD) \emph{E-mail address:} \texttt{vincenzo.dimonte@gmail.com}}}
\begin{document}

\maketitle

\begin{abstract}

The algebra of embeddings at the I3 level has been deeply analyzed, but nothing is known algebra-wise for embeddings above I3. In this paper is introduced an operation for embeddings at the level of I0 and above, and it is proven that they generate an LD-algebra that can be quite different from the I3 one.

\emph{Keywords}: Axiom I0, LD-algebra, elementary embeddings, non-proper ordinals.

\emph{2010 Mathematics Subject Classifications}: 03E55 (08-xx).

\end{abstract}

\section{Introduction}

The connection between large cardinals and LD-algebras is one of the most intriguing success stories of the theory of large cardinals. LD-algebras are algebras with one operator that satisfies the left-distributive law, i.e., 
\begin{equation*}
 \forall x,y,z\ x*(y*z)=(x*z)*(x*y). 
\end{equation*}
At first sight, they have nothing to do with large cardinals, as they can be small, countable, even finite. Large cardinals above a certain point, on the other hand, are always defined by elementary embeddings. At the top of the large cardinal hierarchy there are the so-called rank-into-rank embeddings: the weakest ones are called I3 (i.e., the existence of $j:V_{\lambda}\prec V_\lambda$), then I2 is stronger then I3, I1 is stronger than I2, and so on. These hypotheses are exorbitantly strong, stronger than any large cardinal ``normally'' used (for example, under I3$(\lambda)$, $\lambda$ is limit of cardinals that are $n$-huge for any $n\in\omega$). Yet, it is possible to define an operation on the embeddings for I3 that is left distributive. Laver \cite{Laver1} proved that the algebra generated by one embedding is free, and therefore isomorphic to $F_1$, the free LD-algebra with one generator.

The beauty in this approach is that now we can use all the strength and peculiarities of elementary embeddings to prove results on the algebra of embeddings, and then all these results will be automatically transfered to $F_1$, that is a countable "simple" object, living in \ZFC. For example, this approach was used to prove that in $F_1$ the word problem is decidable, and that left division is a linear ordering. With time, the same things were proved under \ZFC, but I3 pointed the way, and there are still some results for which we do not know whether I3 is necessary. For more information on this, \cite{Dehornoy1} is an exhaustive survey, while \cite{Dehornoy2} explores in depth the algebraic part.

It is natural to ask if the same trick can be used for more generators. It is still open:
\begin{Q} 
 (I3) Are there $j,k:V_\lambda\prec V_\lambda$ such that the LD-algebra generated by them is free?
\end{Q}

One way to approach this problem is to look for stronger hypotheses. Up to I1, actually, the structure of the algebra generated by two I1-embeddings is isomorphic to the one generated by I3-embeddings. On I0, the definition of the operator as in I3 does not work, and one should find a new definition (even if it works only on embeddings with a certain property). Yet, the structure again is not new. To find something really different we have to climb up the hierarchy above I0.

In this paper, we introduce an operator on embeddings that witness hypotheses above I0, that is still consistent with the operation on I3 and that generates an LD-algebra. As the theory of the hypotheses above I0 is varied and with plenty of different situations, this will provide an abundance of new LD-algebras to work with. As an example, two embeddings are introduced that enjoy strong independence properties (even if it is still not clear whether they produce a free algebra).

The definition of the operator uses key properties of the $E^0_\alpha$-hierarchy, so much of the preliminaries is dedicated to its introduction and definition. In the rest of the paper the application is defined on a particular kind of elementary embeddings, whose existence is derived from the $E^0_\alpha$-hierarchy, it is proven that the application generates an LD-algebra and it is analyzed how much such an algebra is similar or different from the I3 case.

\section{Preliminaries}

 To avoid confusion or misunderstandings, all notations and standard basic results are collected here.

  The double arrow (e.g. $f:a\twoheadrightarrow b$) denotes a surjection.
  
	If $X$ is a set, then $L(X)$ denotes the smallest inner model of \ZF{} that contains $X$; it is defined like $L$ but starting with the transitive closure of $\{X\}$ as $L_0(X)$.

  If $X$ is a set, then $\OD_X$ denotes the class of the sets that are \emph{ordinal-definable over $X$}, i.e., the sets that are definable using ordinals, $X$ and elements of $X$ as parameters. $\HOD_X$ denotes the class of the sets that are \emph{hereditarily ordinal-definable over $X$}, i.e., the sets in $\OD_X$ such that all the elements of their transitive closure are in $\OD_X$.  For example, $L(X)\vDash V=\HOD_X$. One advantage in considering models of $\HOD_X$ is the possibility of defining partial Skolem functions. Let $\varphi(v_0, v_1, \dots,v_n)$  be a formula with $n+1$ free variables and let $a\in X$. Then:
  \begin{equation*}
    h_{\varphi,a}(x_1,\dots,x_n)=\begin{cases}
      y                   & \text{where }y\text{ is the least in }OD_{\{a\}}\text{ such that }\\
                       & \quad\varphi(y,x_1,\dots,x_n)\\
      \emptyset           & \text{if }\forall x\neg\varphi(x,x_1,\dots,x_n)\\
      \text{not defined} & \text{otherwise}
    \end{cases}
  \end{equation*}
  are partial Skolem functions. For every set or class $y$, $H^{L(X)}(y)$ denotes the closure of $y$ under partial Skolem functions for $L(X)$, and $H^{L(X)}(y)\prec L(X)$.
	
  If $M$ and $N$ are sets or classes, $j:M\prec N$ denotes that $j$ is an elementary embedding from $M$ to $N$, that is an injective function such that for any formula $\varphi$ and any $x\in M$, $M\vDash\varphi(x)$ iff $N\vDash\varphi(j(x))$. The case in which $j$ is the identity, i.e., if $M$ is an elementary submodel of $N$, is simply written as $M\prec N$.

  If $M\vDash\AC$ or $N\subseteq M$ and $j:M\prec N$ is not the identity, then it moves at least one ordinal. The \emph{critical point} of $j$, $\crt(j)$, is the least ordinal moved by $j$.

  Let $j$ be an elementary embedding and $\kappa=\crt(j)$. Define $\kappa_0=\kappa$ and $\kappa_{n+1}=j(\kappa_n)$. Then $\langle \kappa_n:n\in\omega\rangle$ is the \emph{critical sequence} of $j$.
  
  Kunen \cite{Kunen} proved that if $M=N=V_\eta$ for some ordinal $\eta$, and $\lambda$ is the supremum of the critical sequence, then $\eta$ cannot be bigger than $\lambda+1$ (and of course cannot be smaller than $\lambda$).
	
Kunen's result actually does not say anything about the cases $\eta=\lambda$ or $\eta=\lambda+1$. Therefore we can introduce the following hypotheses without fearing an immediate inconsistency:

\begin{description}
 \item[I3] There exists $j:V_\lambda\prec V_\lambda$, where $\lambda$ is the supremum of the critical sequence of $j$.
 \item[I0] There exists $j:L(V_{\lambda+1})\prec L(V_{\lambda+1})$, where $\crt(j)<\lambda$.
\end{description}

We add $\crt(j)<\lambda$ so that I0 implies I3 (see \cite{Kanamori} for the definitions of I2 and I1). It is immediate to see that if I0 holds for $j$, then $\lambda$ is the supremum of the critical sequence of $j$.

The most fruitful consequences of I3 are its connections with algebra. Call ${\cal E}_\lambda=\{j:j:V_\lambda\prec V_\lambda\}$. Then we can define an operation on ${\cal E}_\lambda$, the application: if $j,k\in{\cal E}_\lambda$, $j\cdot k=\bigcup_{n\in\omega} j(k\cap V_{\kappa_n})$, where $\kappa_n$ is the critical sequence of $j$. Alternative notations that will be used are $j(k)$ and $j^+(k)$. Note that application should not be confused with the more common composition, that is very different: for example $\crt(j(j))=j(\crt(j))>\crt(j)$, but $\crt(j\circ j)=\crt(j)$. 

\begin{prop} [Laver \cite{Laver2}]
 $j\cdot k\in{\cal E}_\lambda$.
\end{prop}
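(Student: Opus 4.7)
The plan is to verify $j\cdot k\in{\cal E}_\lambda$ by checking that $\bigcup_{n\in\omega}j(k\cap V_{\kappa_n})$ is a well-defined function $V_\lambda\to V_\lambda$ and that it is elementary, exploiting the elementarity of $k$ on each bounded piece and then pushing the resulting $V_\lambda$-statement through $j$.

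First I would observe that each restriction $k\cap V_{\kappa_n}$ lies in $V_\lambda$: its domain is bounded by $V_{\kappa_n}$, and by elementarity of $k$ its range sits inside $V_{k(\kappa_n)}$ with $k(\kappa_n)<\lambda$, so the set of pairs has rank strictly below $\lambda$. Hence $j(k\cap V_{\kappa_n})\in V_\lambda$, and by elementarity of $j$ it is a function whose graph lies in $V_{j(\kappa_n)}=V_{\kappa_{n+1}}$. Since $k\cap V_{\kappa_n}\subseteq k\cap V_{\kappa_{n+1}}$ is a $V_\lambda$-fact preserved by $j$, the sets $j(k\cap V_{\kappa_n})$ form an increasing chain, whose union is therefore a single function. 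The domain exhausts $V_\lambda$: given $y\in V_{\kappa_m}$, taking $n$ large enough that $\kappa_n>k(\kappa_m)$ gives $V_{\kappa_m}\subseteq\dom(k\cap V_{\kappa_n})$, and elementarity of $j$ lifts this to $V_{\kappa_{m+1}}\subseteq\dom(j(k\cap V_{\kappa_n}))$, so $y$ is in the domain of $j\cdot k$.

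For elementarity, I would fix a formula $\varphi$ and $x\in V_\lambda$, pick $n$ so that $x\in V_{\kappa_{n+1}}$, and use that $k$ agrees with $k\cap V_{\kappa_n}$ on the latter's domain. Elementarity of $k$ then yields the $V_\lambda$-true statement
\begin{equation*}
\forall y\in\dom(k\cap V_{\kappa_n})\,\bigl[\varphi(y)\leftrightarrow\varphi((k\cap V_{\kappa_n})(y))\bigr],
\end{equation*}
whose parameters all lie in $V_\lambda$. Pushing it through $j$ gives
\begin{equation*}
\forall y\in\dom(j(k\cap V_{\kappa_n}))\,\bigl[\varphi(y)\leftrightarrow\varphi(j(k\cap V_{\kappa_n})(y))\bigr],
\end{equation*}
and specializing $y=x$ yields $\varphi(x)\leftrightarrow\varphi((j\cdot k)(x))$.

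The main obstacle is conceptual rather than computational: since $k\notin V_\lambda$ one cannot form $j(k)$ directly, and the entire definition is forced to proceed through the bounded restrictions $k\cap V_{\kappa_n}$. The delicate step is the double-elementarity argument, which succeeds only because the statement to be pushed through $j$ has been rewritten so that its only non-ordinal parameter is $k\cap V_{\kappa_n}$, an element of $V_\lambda$ that faithfully codes $k$'s action on a bounded piece.
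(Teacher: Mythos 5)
Your argument is correct and is essentially the standard Laver--Steel proof that the paper simply cites: each piece $k\cap V_{\kappa_n}$ is an element of $V_\lambda$ coding $k$'s action on a bounded set, the images under $j$ cohere into a total map, and elementarity is obtained by first using elementarity of $k$ to certify a first-order statement of $V_\lambda$ with parameter $k\cap V_{\kappa_n}$ and then pushing that statement through $j$. The only (cosmetic) omission is that you run the final step for formulas $\varphi$ with one free variable; for full elementarity (and for injectivity of $j\cdot k$) one states and transfers the same sentence for tuples $y_1,\dots,y_m\in\dom(k\cap V_{\kappa_n})$, which goes through verbatim.
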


 Moreover, $j\cdot (k\cdot h)=(j\cdot k)\cdot(j\cdot h)$. This is called the left distributive law, and $({\cal E}_\lambda, \cdot)$ is a left distributive algebra, or LD-algebra.

One can see the LD-algebra in a more abstract way. Let $T_n$ be the set of words constructed using the variables $x_1,\dots,x_n$ and a binary operator $\cdot$. Denote $\equiv_{LD}$ the congruence on $T_n$ generated by all pairs of the form $t_1\cdot(t_2\cdot t_3)$, $(t_1\cdot t_2)\cdot(t_1\cdot t_3)$. Then $T_n/\equiv_{LD}$ is a free LD-algebra with $n$ generators. We call it $F_n$.

Given an LD-algebra, we can consider its subalgebra ${\cal A}_X$ generated by the elements in a finite subset $X$. We say that the subalgebra is free iff it is isomorphic to $F_{|X|}$. By the universal property of $F_{|X|}$, there is always a surjective homomorphism $\pi$ from $F_{|X|}$ to ${\cal A}_X$ (it sends the generators $x_1,\dots, x_n$ to $X$ and is a morphism for the operator), therefore ${\cal A}_X$ is free iff such homomorphism is also injective iff for any two $t_1,t_2\in T_{|X|}$, $\pi(t_1)=\pi(t_2)$ iff $t_1\equiv_{LD} t_2$.

Let ${\cal A}_j$ be the subalgebra of ${\cal E}_\lambda$ generated by $\{j\}$. 

\begin{teo}[Laver \cite{Laver1}]
 \label{free}
 ${\cal A}_j$ is a free LD-algebra.
\end{teo}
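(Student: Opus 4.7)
The plan is to establish the injectivity of the canonical surjection $\pi : F_1 \twoheadrightarrow {\cal A}_j$ sending the generator to $j$; equivalently, to show that two LD-inequivalent terms $t_1, t_2 \in T_1$ necessarily produce distinct embeddings $t_1(j) \ne t_2(j)$ in ${\cal E}_\lambda$.

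The central tool is a critical-point analysis. A direct unwinding of the definition $k \cdot k' = \bigcup_n k(k' \cap V_{\kappa_n^k})$, combined with the elementarity of $k$, yields the identity $\crt(k \cdot k') = k(\crt(k'))$ for any $k, k' \in {\cal E}_\lambda$. Induction on term structure then shows that every $k \in {\cal A}_j$ has critical point inside the critical sequence $\{\kappa_n\}_{n < \omega}$ of $j$, and assigns to each $t \in T_1$ a level $c(t) \in \omega$ with $\crt(t(j)) = \kappa_{c(t)}$, a function invariant under $\equiv_{LD}$ by left-distributivity of application. The level is too coarse on its own; I would refine it by considering the full action of $t(j)$ on the critical sequence --- the map sending $n$ to the unique $m$ with $t(j)(\kappa_n) = \kappa_m$ --- which can be computed symbolically from the term $t$ by iterating the critical-point identity.

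The crucial auxiliary fact is an \emph{irreflexivity} statement: no $k, k' \in {\cal A}_j$ satisfy $k = k \cdot k'$. This follows immediately from the identity, which forces $\crt(k) = k(\crt(k'))$. If $\crt(k') < \crt(k)$, the right-hand side equals $\crt(k')$, contradicting $\crt(k) > \crt(k')$. If $\crt(k') = \crt(k)$, it equals $k(\crt(k))$, which strictly exceeds $\crt(k)$ since $\crt(k)$ is moved by $k$ --- contradiction. If $\crt(k') > \crt(k)$, order-preservation gives $k(\crt(k')) \ge \crt(k') > \crt(k)$ --- again a contradiction.

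With irreflexivity secured, injectivity of $\pi$ follows by an induction on term complexity. A minimal supposed counterexample $t_1(j) = t_2(j)$ with $t_1 \not\equiv_{LD} t_2$ is analyzed by bringing both terms into a canonical normal form with an identified leftmost factor, using the refined critical-sequence invariant to force those leftmost factors to coincide up to $\equiv_{LD}$, and then contradicting minimality via an equation between strictly shorter terms. The main obstacle is precisely this ``cancellation of leftmost factors'': LD-algebras admit no direct left-cancellation law, so executing it requires a careful normal-form analysis in $F_1$ together with a precise tracking of how LD-rewrites interact with the critical-point computation --- this is the technical heart of Laver's original argument.
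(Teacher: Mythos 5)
The paper does not prove this theorem at all: it is quoted as Laver's result and used as a black box (indeed, the related Laver--Steel theorem and Laver's Criterion are only \emph{stated} later in the paper). So your proposal has to be judged against Laver's actual argument, and as it stands it has genuine gaps beyond the one you acknowledge. First, the structural claim on which your invariant rests is false: it is \emph{not} true that every $k\in{\cal A}_j$ has critical point in the critical sequence $\langle\kappa_n\rangle$ of $j$. By results of Laver and Dougherty, the set $\{\crt(k):k\in{\cal A}_j\}\cap\kappa_n$ is finite but grows faster than the Ackermann function in $n$, so there are many critical points strictly between consecutive $\kappa_n$'s; your level function $c(t)$ with $\crt(t(j))=\kappa_{c(t)}$ is therefore not well defined, and the ``refined critical-sequence invariant'' built on it does not exist in the form you describe. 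Second, the irreflexivity you prove is only the one-step statement $k\neq k\cdot k'$. What Laver's Criterion actually requires is acyclicity of the full left-division relation, i.e.\ that no $k$ equals $(\dots((k\cdot k_1)\cdot k_2)\dots)\cdot k_n$ for any $n\geq 1$; already for $n=2$ your case analysis on $\crt$ gives no contradiction, since $\crt(k\cdot k_1)$ can drop back down to $\crt(k_1)<\crt(k)$ and then be sent anywhere by the next application. This multi-step acyclicity is the Laver--Steel theorem, whose proof needs a genuinely different ingredient (a well-foundedness argument via iterated ultrapowers, Steel's theorem), not just the identity $\crt(k\cdot k')=k(\crt(k'))$.

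Third, even granting acyclicity, the reduction of freeness to it (Laver's Criterion: a monogenic LD-algebra is free iff $\leq_L$ has no cycle) is itself a substantial theorem about the free LD-algebra $F_1$ --- the comparison property and normal forms --- and this is precisely the ``cancellation of leftmost factors'' that you flag as the technical heart but do not carry out. So the proposal correctly identifies the right objects (the critical-point identity, left division, normal forms) but the three load-bearing steps --- the location of critical points, the passage from one-step to full acyclicity, and the algebraic criterion --- are respectively wrong, missing, and deferred.
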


I0 has instead received attention because of its similarities with $\AD^{L(\mathbb{R})}$. Woodin in \cite{Woodin} tried to push these similarities even further, creating a hierarchy of new hypotheses stronger than I0, with the objective of finding a hypothesis similar to $\AD_\mathbb{R}$. To do this, instead of dealing with $L(V_{\lambda+1})$, we deal with $L(N)$, where $V_{\lambda+1}\subseteq N\subseteq V_{\lambda+2}$ and $N=L(N)\cap V_{\lambda+2}$, and with embeddings from $L(N)$ to itself. Actually, we are interested only in models of the type $L(X,V_{\lambda+1})$ with $X\subseteq V_{\lambda+1}$, but it turns out that it is advantageous to start working with more generality.

We will work in $L(N)$'s that do not satisfy the Axiom of Choice. Like in $L(\mathbb{R})$, it is possible to define a cardinal in $L(N)$ that ``measures'' the largeness of $V_{\lambda+1}$:
  \begin{defin}
    Let $M$ be a set or a class such that $V_{\lambda+1}\subseteq M$. Then $\Theta^M$ is the supremum of the ordinals $\alpha$ such that there exists $\pi:V_{\lambda+1}\twoheadrightarrow\alpha$ with $\{(a,b)\in V_{\lambda+1}\times V_{\lambda+1}:\pi(a)<\pi(b)\}\in M$. If $M$ is a class, then this is equivalent to the more classical definition:
    \begin{equation*}
     \Theta^M=\sup\{\alpha:\exists\pi:V_{\lambda+1}\twoheadrightarrow\alpha,\ \pi\in M\}.
    \end{equation*}
  \end{defin}
	
  Note that $\Theta^{L(N)}$ is a cardinal in $L(N)$, and $\lambda^+<\Theta^{L(N)}\leq(2^\lambda)^+$. Moreover, if $L(N)\cap V_{\lambda+2}=N$ then $\Theta^{L(N)}=\Theta^N$. 
	
	Unlike the I3-embeddings, embeddings from $L(N)$ to itself have a nice property: we can assume without loss of generality that they are generated by an ultrafilter:
	
	\begin{teo}[\cite{Woodin}]
    \label{LosN}
    Let $V_{\lambda+1}\subseteq N\subset V_{\lambda+2}$ be such that $L(N)\cap V_{\lambda+2}=N$ and let $j:L(N)\prec L(N)$. Then there exists an ultrafilter $U\subset N$ such that $\Ult(L(N),\ U)$ is well-founded. By condensation the collapse of $\Ult(L(N),\ U)$ is $L(N)$ and $j_U:L(N)\prec L(N)$, the inverse of the collapse, is an elementary embedding with $\crt(j)<\lambda$. Moreover, there is an elementary embedding $k_U:L(N)\prec L(N)$ that is the identity on $N$ and such that $j=j_U\circ k_U$.
  \end{teo}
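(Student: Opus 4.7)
The plan is to carry out a derived ultrapower construction from $j$, adapted to the choiceless setting of $L(N)$. I would first fix a ``seed'' $s \in N$ capturing $j$'s action on $V_\lambda$; a convenient choice is $s = j \restriction V_\lambda$, coded as its graph in $V_{\lambda+1} \subseteq N$. Taking a suitable $X \in V_{\lambda+1} \cap L(N)$ with $s \in X$, I then set
\[
 U := \{A \in \mathcal{P}(X) \cap L(N) : s \in j(A)\},
\]
and note that $\mathcal{P}(X) \cap L(N) \subseteq L(N) \cap V_{\lambda+2} = N$, so $U \subseteq N$ as required. Elementarity of $j$ yields at once that $U$ is an $L(N)$-ultrafilter.

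Next I would form $\Ult(L(N), U)$ using representatives $f \in L(N)$. The delicate point, in the absence of \AC, is \L{}o\'{s}'s theorem: I would exploit that $L(N) \vDash V = \HOD_N$ and invoke the partial Skolem functions $h_{\varphi, a}$ of the preliminaries to select witnesses to existentials, taking representatives of the form $g(x) := h_{\varphi, a}(f(x))$ with $a \in N$. With \L{}o\'{s} available, the factor
\[
 \ell : [f]_U \longmapsto j(f)(s)
\]
is an elementary embedding of $\Ult(L(N), U)$ into $L(N)$, and well-foundedness of the ultrapower then pulls back from $L(N)$ through $\ell$.

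The main obstacle is identifying the Mostowski collapse of $\Ult(L(N), U)$ with $L(N)$ itself. Internally the ultrapower satisfies $V = L(\tilde N)$ where $\tilde N$ is the ultrapower image of $N$, so the collapse has the form $L(\bar N)$ for some $\bar N \subseteq V_{\lambda+2}$. The hypothesis $L(N) \cap V_{\lambda+2} = N$ enters crucially here: a condensation-style argument uses it to identify $\bar N$ with $N$, thereby concluding that the collapse equals $L(N)$. Intuitively, the hypothesis says precisely that $N$ is canonically determined as the $V_{\lambda+2}$-part of $L(N)$, which is exactly what pins the collapse down.

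Finally I would read off $j_U$ as the ultrapower map after collapse and $k_U$ as $\ell$ composed with the inverse of the collapse, verify $\crt(j_U) < \lambda$ by inspection, and derive the factorisation $j = j_U \circ k_U$ (in the paper's convention) from $\ell \circ i = j$ by unwinding the collapse identification. For the identity-on-$N$ property, the canonical function $f_A(h) := h^{-1}[A]$ for $A \in V_{\lambda+1}$ satisfies $j(f_A)(s) = s^{-1}[j(A)] = A$ by injectivity and elementarity of $s = j \restriction V_\lambda$, so $\ell([f_A]_U) = A$; extending this representation through all of $N$ yields $k_U|_N = \mathrm{id}$.
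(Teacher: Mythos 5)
The paper offers no proof of this statement: it is quoted directly from Woodin's \emph{Suitable extender models II}, so there is no in-paper argument to measure yours against. That said, your outline is the standard derived-ultrafilter argument, and its architecture is right: seed $s=j\upharpoonright V_\lambda$ coded in $V_{\lambda+1}$, the ultrafilter $\{A: s\in j(A)\}$, {\L}o{\'s}'s theorem recovered from the partial Skolem functions available because $L(N)\vDash V=\HOD_{N\cup\{N\}}$, the factor map $\ell([f]_U)=j(f)(s)$ giving well-foundedness, and condensation for the collapse.

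Three points need repair. First, $s$ has rank $\lambda$, so it cannot belong to any $X\in V_{\lambda+1}$; the underlying set of the ultrafilter must be $V_{\lambda+1}$ itself (or some $X\in N$ containing $s$), after which $\mathcal{P}(X)\cap L(N)\subseteq N$ still holds. Second, unwinding $\ell\circ i_U=j$ through the collapse $\pi$ gives $j=(\ell\circ\pi^{-1})\circ(\pi\circ i_U)=k_U\circ j_U$, not $j_U\circ k_U$; you should either reconcile this with the stated order or flag the convention. Third, and most substantially, ``a condensation-style argument identifies $\bar N$ with $N$'' conceals the actual work. Absoluteness of the $L$-construction only yields that the collapse is $L(\bar N)$ for $\bar N=M\cap V_{\lambda+2}$. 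To get $\bar N=N$ you need both inclusions: $\bar N\subseteq N$ follows once you know $k_U\upharpoonright V_{\lambda+1}=\mathrm{id}$ (then each $A\in\bar N$ is fixed by $k_U$ and so lies in $\ran(k_U)\cap V_{\lambda+2}\subseteq L(N)\cap V_{\lambda+2}=N$), and $N\subseteq\bar N$ needs $N\subseteq\ran(\ell)$ together with the fact that these preimages collapse to themselves. Both reduce to seed computations, but for a general $A\in N$ (a subset of $V_{\lambda+1}$, not of $V_\lambda$) your function $h\mapsto h^{-1}[A]$ does not apply, since elements of $A$ need not lie in $\dom(h)=V_\lambda$; one must instead use $f_A(h)=\{a\in V_{\lambda+1}:\hat h(a)\in A\}$, where $\hat h(a)=\bigcup_n h(a\cap V_{\kappa_n^h})$ is the canonical extension of $h$ to $V_{\lambda+1}$ (and check that $U$ concentrates on the $h:V_\lambda\prec V_\lambda$ with cofinal critical sequence, which holds because $s$ belongs to the $j$-image of that set). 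Verifying that $\pi([f_A])=A$ as well as $\ell([f_A])=A$ is precisely the proof that $j_U\upharpoonright V_{\lambda+1}=j\upharpoonright V_{\lambda+1}$, and it is the technical heart of the lemma rather than an afterthought.
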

	
	 By Theorem \ref{LosN} any elementary embedding $j:L(N)\prec L(N)$ can be factored into two elementary embeddings, $j=j_U\circ k$. The first embedding, $j_U$, is obtained from an ultrafilter, and it is completely determined by its behaviour on $N$; the second one, $k$, is the identity on $N$ and moves only larger cardinals, and hence can be generated by a shift of indiscernibles. Note that $j_U$ witnesses I0, while $k$ seems only a combinatorial permutation.

  \begin{defin}
    \label{proper}
    Let $V_{\lambda+1}\subseteq N\subset V_{\lambda+2}$ be such that $L(N)\cap V_{\lambda+2}=N$ and let $j:L(N)\prec L(N)$. For every $a\in L(N)$, we will indicate with $\langle a_0,a_1,\dots\rangle$ the iteration of $a$ under the action of $j$, i.e., $a_0=a$ and $a_{i+1}=j(a_i)$ for all $i\in\omega$. Then
    \begin{itemize}
      \item $j$ is \emph{weakly proper} if $j=j_U$;
      \item $j$ is \emph{proper} if it is weakly proper and if for every $X\in N$, $\langle X_i:i<\omega\rangle\in L(N)$ .
    \end{itemize}
  \end{defin}
	
	Properness was introduced because it implies iterability. In \cite{totally} and \cite{partially}, there are indicated some $L(N)$'s on which not all elementary embeddings are proper, sometimes even none.
	
	We call ${\cal E}(N)=\{j:j:N\prec N\}$ and we write $N<X$ if in $L(X,V_{\lambda+1})$ there exists a $\pi:V_{\lambda+1}\twoheadrightarrow N$.
	
	Now we can define the hypotheses above I0. They are a ``canonical'' sequence of $N$'s such that there exists $j:L(N)\prec L(N)$.
	
	\begin{defin}[Woodin, \cite{Woodin}]
   \label{E}
    Let $\lambda$ be a limit ordinal with cofinality $\omega$. The sequence
      \begin{equation*}
	\langle E^0_\alpha(V_{\lambda+1}):\alpha<\Upsilon_{V_{\lambda+1}}\rangle
      \end{equation*}
    is the maximum sequence such that the following hold:
      \begin{enumerate}
	\item $E^0_0(V_{\lambda+1})=L(V_{\lambda+1})\cap V_{\lambda+2}$;
	\item for $\alpha<\Upsilon_{V_{\lambda+1}}$ limit, $E^0_\alpha(V_{\lambda+1})=L(\bigcup_{\beta<\alpha} E^0_\beta(V_{\lambda+1}))\cap V_{\lambda+2}$; 
	\item for $\alpha<\Upsilon_{V_{\lambda+1}}$ limit,
	  \begin{itemize}
	    \item if $L(E^0_\alpha(V_{\lambda+1}))\vDash\cof(\Theta^{E^0_\alpha(V_{\lambda+1})})<\lambda$ then
	      \begin{equation*}
		E^0_{\alpha+1}(V_{\lambda+1})=L((E^0_\alpha(V_{\lambda+1}))^\lambda)\cap V_{\lambda+2};
	      \end{equation*}
	    \item if $L(E^0_\alpha(V_{\lambda+1}))\vDash\cof(\Theta^{E^0_\alpha(V_{\lambda+1})})>\lambda$ then
	      \begin{equation*}
		E^0_{\alpha+1}(V_{\lambda+1})=L({\cal E}(E^0_\alpha(V_{\lambda+1})))\cap V_{\lambda+2};
	      \end{equation*}
	  \end{itemize}
	\item for $\alpha=\beta+2<\Upsilon_{V_{\lambda+1}}$, there exists $X\subseteq V_{\lambda+1}$ such that $E^0_{\beta+1}(V_{\lambda+1})=L(X,V_{\lambda+1})\cap V_{\lambda+2}$ and $E^0_\beta(V_{\lambda+1})<X$, and
	  \begin{equation*}
	    E^0_{\beta+2}=L((X,V_{\lambda+1})^\sharp)\cap V_{\lambda+2}
	  \end{equation*}
	\item \label{less} $\forall\alpha<\Upsilon_{V_{\lambda+1}}\ \exists X\subseteq V_{\lambda+1}$ such that $E^0_\alpha(V_{\lambda+1})\subset L(X,V_{\lambda+1})$, $\exists j\colon L(X,V_{\lambda+1})\prec L(X,V_{\lambda+1})$ proper;			
	\item \label{HODZ} $\forall\alpha$ limit, $\alpha+1<\Upsilon_{V_{\lambda+1}}$ iff
	  \begin{multline*}
	    \text{if }L(E^0_\alpha(V_{\lambda+1}))\vDash\cof(\Theta^{E^0_\alpha(V_{\lambda+1})})>\lambda\\
	    \text{then }\exists Z\in E^0_\alpha(V_{\lambda+1})\  L(E^0_\alpha(V_{\lambda+1}))\vDash V=\HOD_{V_{\lambda+1}\cup\{Z\}}.
	  \end{multline*}
      \end{enumerate}
  \end{defin}

 For the rest of the paper we will use just the notation $E^0_\alpha$ instead of $E^0_\alpha(V_{\lambda+1})$ and $\Upsilon$ instead of $\Upsilon_{V_{\lambda+1}}$. It is not important for the purpose of this paper what the exact definition of $X^\sharp$ is, it is just some kind of description of the truth in $L(X,V_{\lambda+1})$.  
 
 What is the intuition behind this complex definition? The idea is to find a hierarchy of $L(X,V_{\lambda+1})$ equipped with elementary embeddings that is as much canonical as possible. The first step is to notice that the existence of a $j:L(V_{\lambda+1})\prec L(V_{\lambda+1})$ is equivalent to $j:(V_{\lambda+1},(V_{\lambda+1})^\sharp)\prec (V_{\lambda+1},(V_{\lambda+1})^\sharp)$. It is therefore natural to define the first step above I0 as the existence of 
\begin{equation*}
 j:L(V_{\lambda+1},(V_{\lambda+1})^\sharp)\prec L(V_{\lambda+1},(V_{\lambda+1})^\sharp), 
\end{equation*}
or $j:(V_{\lambda+1},(V_{\lambda+1})^{\sharp\sharp})\prec (V_{\lambda+1},(V_{\lambda+1})^{\sharp\sharp})$. Moreover, all the I0-embeddings are in $L(V_{\lambda+1},(V_{\lambda+1})^\sharp)$, so it is a step that really transcends I0.

The first idea is therefore to consider a hierarchy where every step is the sharp of the precedent one. But there can be a problem at the limit stage: it is the union of the previous stages, and it is possible that it is a model that cannot be described as $L(X,V_{\lambda+1})$, with $X\subseteq V_{\lambda+1}$. So, let $\alpha$ be limit. We want $L(E^0_{\alpha+1})$ to be some $L(X,V_{\lambda+1})$: Instead of adding $(E^0_\alpha)^\sharp$, we add something slightly smaller (depending on the cofinality of the relative $\Theta$). We have three possibilities:
\begin{itemize}
 \item There exists $Y\subseteq V_{\lambda+1}$ such that $L(E^0_\alpha)=L(Y,V_{\lambda+1})$. Then Lemma 28 and Theorem 31 in \cite{Woodin} prove that $L(E^0_{\alpha+1})=L(Y^\sharp,V_{\lambda+1})$, so this step is what we expect;
 \item There is no $Y\subseteq V_{\lambda+1}$ such that $L(E^0_\alpha)=L(Y,V_{\lambda+1})$, but if $E^0_\alpha$ is ``small enough'', then there exists $X\subseteq V_{\lambda+1}$ such that $L(E^0_{\alpha+1})=L(X,V_{\lambda+1})$, and for any $Y\in E^0_\alpha$, $Y^\sharp\in L(E^0_{\alpha+1})$;
 \item There is no $Y\subseteq V_{\lambda+1}$ such that $L(E^0_\alpha)=L(Y,V_{\lambda+1})$, and $E^0_\alpha$ is not ``small enough'', then the construction stops.
\end{itemize}

So the sequence can end for three reasons:
\begin{itemize}
 \item there are no more proper embeddings (i.e., there is no proper elementary embedding from the eventual $L(E^0_{\alpha+1})$);
 \item there are no more sharps (i.e., $E^0_{\alpha+1}$ cannot even be constructed);
 \item it is not possible to do the successor of the limit stage, as above.
\end{itemize}

Why is this sequence canonical? It is a consequence of Theorem 34 in \cite{Woodin}:

\begin{teo}
 For $X\subseteq V_{\lambda+1}$, if there exists a proper $j:L(X,V_{\lambda+1})\prec L(X,V_{\lambda+1})$, $\crt(j)<\lambda$, then $E^0_\alpha\in L(X,V_{\lambda+1})$ for all $\alpha$'s such that $\Theta^{E^0_\alpha}\leq \Theta^{L(X,V_{\lambda+1})}$, and $L(X,V_{\lambda+1})\vDash \exists k:L(E^0_\alpha)\prec L(E^0_\alpha)$.
\end{teo}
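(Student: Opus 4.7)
The proof proceeds by transfinite induction on $\alpha$, establishing simultaneously that $E^0_\alpha \in L(X,V_{\lambda+1})$ and that some embedding $k : L(E^0_\alpha) \prec L(E^0_\alpha)$ is witnessed inside $L(X,V_{\lambda+1})$. The base case $\alpha=0$ is essentially free: $E^0_0 = L(V_{\lambda+1}) \cap V_{\lambda+2}$ is defined from $V_{\lambda+1}$ alone, and $j$ restricted to $L(V_{\lambda+1})$ provides the required embedding.

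At limit $\alpha$, the inductive assumption together with the absoluteness of the defining clauses of Definition \ref{E} lets one form $L(\bigcup_{\beta<\alpha} E^0_\beta) \cap V_{\lambda+2}$ inside $L(X,V_{\lambda+1})$; the hypothesis $\Theta^{E^0_\alpha} \leq \Theta^{L(X,V_{\lambda+1})}$ is what prevents this recursive construction from outrunning the ambient model. An embedding on $L(E^0_\alpha)$ can then be extracted from $j$ via Theorem \ref{LosN}: the relevant ultrafilter $U \subset E^0_\alpha$ and the corresponding ultrapower are definable in $L(X,V_{\lambda+1})$ because $E^0_\alpha$ is.

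For successor steps $\alpha = \beta+1$, one distinguishes according to the three clauses of Definition \ref{E}. The sharp clause ($\alpha = \gamma + 2$) is the delicate one: writing $E^0_{\gamma+1} = L(Y,V_{\lambda+1}) \cap V_{\lambda+2}$, one must realize $(Y,V_{\lambda+1})^\sharp$ inside $L(X,V_{\lambda+1})$. This is where properness is used: properness of $j$ yields iterability of the restriction of $j$ to $L(Y,V_{\lambda+1})$, and iterability is exactly the information that allows the sharp to be generated internally. In the two cofinality-driven successor-of-limit clauses, $(E^0_\beta)^\lambda$ or $\mathcal{E}(E^0_\beta)$ is obtained by collecting iterates of, or internally available restrictions of, the embeddings produced by the previous induction step.

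The principal obstacle is the sharp step, since it requires transferring iterability from the embedding on $L(X,V_{\lambda+1})$ down to each intermediate $L(E^0_\gamma,V_{\lambda+1})$, and then verifying that the iterates computed inside $L(X,V_{\lambda+1})$ agree with the true sharp as understood externally. A secondary but pervasive subtlety is maintaining the $\Theta$-bound throughout the induction: at every stage one must check that $\Theta^{E^0_\alpha}$ stays below $\Theta^{L(X,V_{\lambda+1})}$, as loss of this bound at any level would preclude the next level from being definable in the ambient model.
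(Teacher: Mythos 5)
The paper offers no proof of this statement: it is quoted as Theorem 34 of Woodin's \emph{Suitable extender models II}, so there is no in-paper argument to compare yours against. Judged on its own terms, your proposal is the natural skeleton --- transfinite induction on $\alpha$, splitting on the clauses of Definition \ref{E} --- and it correctly locates where the work lies. But locating the work is essentially all it does: the two steps you yourself flag as ``the delicate one'' and ``the principal obstacle'' are precisely the content of the theorem, and they are named rather than carried out. In particular, the assertion that ``iterability is exactly the information that allows the sharp to be generated internally'' is not an argument. To realize $(Y,V_{\lambda+1})^\sharp$ inside $L(X,V_{\lambda+1})$ one must produce, definably over $L(X,V_{\lambda+1})$, a club class of indiscernibles for $L(Y,V_{\lambda+1})$ from the iteration of a proper embedding, and then verify that the resulting theory coincides with the sharp as computed in $V$; this is the technical heart of Woodin's Lemma 28 and Theorem 31 (which the surrounding text of the paper cites for exactly this purpose), and nothing in your sketch substitutes for it.

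A second genuine gap is the appeal to ``absoluteness of the defining clauses of Definition \ref{E}.'' Several clauses are not obviously absolute to $L(X,V_{\lambda+1})$: clause \ref{less} quantifies over all $Y\subseteq V_{\lambda+1}$ carrying proper embeddings, clause \ref{HODZ} involves $\HOD$ computations inside inner models, and the successor-of-limit clauses depend on $\cof(\Theta^{E^0_\beta})$ as computed in $L(E^0_\beta)$. The induction only closes if one shows that $L(X,V_{\lambda+1})$ computes the same sequence $\langle E^0_\beta:\beta\leq\alpha\rangle$ as $V$ does up to the $\Theta$-bound; that is an additional argument, not a remark. Finally, even the base case is not quite ``free'': the theorem demands that $L(X,V_{\lambda+1})$ \emph{internally} satisfy the existence of $k:L(E^0_0)\prec L(E^0_0)$, so you must check that $j\upharpoonright V_{\lambda+1}$ (which does lie in the model) generates, inside the model, a well-founded ultrapower of $L(V_{\lambda+1})$ in the sense of Theorem \ref{LosN}; well-foundedness of that internal ultrapower is exactly where properness, rather than the mere existence of $j$, must enter, and your sketch does not say how.
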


Its definition is also very nicely uniform. We only need these two theorems about this:

\begin{lem}
  \label{Condensation}
    Let $\beta<\Upsilon$, let $M$ be a model of \ZF{} such that $E^0_\beta\subseteq M$ and let $\bar{M}$ be $M$'s transitive collapse. If $M$ is an elementary substructure of $L(E^0_\eta)$ for some $\eta<\Upsilon$, then there exists $\beta\leq\gamma\leq\eta$ such that either $\bar{M}=L(E^0_\gamma)$ or else $\bar{M}=L_\zeta(E^0_\gamma)$ for some $\zeta$. Moreover, if $j:\bar{M}\prec L(E^0_\eta)$ is the inverse of the collapse, then $j(\gamma)=\eta$.
  \end{lem}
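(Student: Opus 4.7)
The plan mimics G\"odel's classical condensation for $L$, replacing $L$ by $L(E^0_\eta)$ and exploiting the uniform definability of the $E^0_\alpha$-hierarchy.

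I would start by showing that the Mostowski collapse $\pi:M\to\bar M$ is the identity on $E^0_\beta$, so that $E^0_\beta\subseteq\bar M$ and $j\upharpoonright E^0_\beta=\mathrm{id}$. Since $V_{\lambda+1}$ is transitive and contained in $E^0_\beta\subseteq M$, $\in$-induction yields $\pi\upharpoonright V_{\lambda+1}=\mathrm{id}$; every $X\in E^0_\beta$ is a subset of $V_{\lambda+1}$ (being in $V_{\lambda+2}$) whose elements already lie in $M$, so $\pi(X)=\{\pi(y):y\in X\}=X$.

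Next, I would extract the form $\bar M=L_\zeta(\bar E)$ from elementarity: $L(E^0_\eta)\vDash V=L(E^0_\eta)$, so by elementarity some $\tilde E\in M$ satisfies $M\vDash V=L(\tilde E)$; setting $\bar E:=\pi(\tilde E)$ we get $\bar M\vDash V=L(\bar E)$, and transitivity of $\bar M$ together with absoluteness of the $L_\alpha(\cdot)$-recursion gives $\bar M=L_\zeta(\bar E)$ for $\zeta=\mathrm{Ord}\cap\bar M$, with the proper-class case $\bar M=L(\bar E)$ corresponding to $\zeta=\mathrm{Ord}$. Moreover $j(\bar E)=E^0_\eta$ by construction.

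The crux is identifying $\bar E=E^0_\gamma$ for some $\beta\leq\gamma\leq\eta$. Here I would argue that Definition \ref{E} uniformly defines the function $\alpha\mapsto E^0_\alpha$ by a formula $\Psi(x,\alpha)$ that is absolute between $V$, $L(E^0_\eta)$ and any ZF-submodel containing $V_{\lambda+1}$. Absoluteness holds because the building blocks---$L(Y)$, $Y^\lambda$, ${\cal E}(Y)$, $(X,V_{\lambda+1})^\sharp$ and the intersection with $V_{\lambda+2}$---are computed absolutely once $V_{\lambda+1}$ is fixed; the dichotomy on $\cof(\Theta^{E^0_\alpha})$ versus $\lambda$ is evaluated inside $L(E^0_\alpha)$ itself, hence is absolute to any outer model containing $L(E^0_\alpha)$; and the existence of proper embeddings demanded by clause (\ref{less}) is witnessed inside $L(E^0_\eta)$. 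Since $L(E^0_\eta)\vDash\Psi(E^0_\eta,\eta)$, elementarity yields $\bar M\vDash\Psi(\bar E,\bar\eta)$ with $\bar\eta:=j^{-1}(\eta)$, and absoluteness forces $\bar E=E^0_{\bar\eta}$. Setting $\gamma:=\bar\eta$ then gives $j(\gamma)=\eta$ automatically; the bound $\gamma\leq\eta$ follows from $j(\gamma)=\eta$ and the non-decrease of $j$ on ordinals, and $\gamma\geq\beta$ follows since $E^0_\beta\subseteq\bar M=L_\zeta(E^0_\gamma)$ together with the strict increase of the $E^0$-sequence forces $\gamma\geq\beta$.

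The main obstacle is the absoluteness part of step three: one must verify carefully that the $\cof(\Theta^{E^0_\alpha})$-dichotomy and the properness clause (\ref{less}) really compute the same in $V$, in $L(E^0_\eta)$ and in $\bar M$, even though $\Theta$-values depend sensitively on which surjections a given model can see. This is exactly where the canonicity theorem for the $E^0_\alpha$-sequence cited just before the lemma becomes indispensable.
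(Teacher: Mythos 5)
The paper itself gives no proof of this lemma: it is quoted as a known structural fact about the $E^0_\alpha$-hierarchy (it comes from Woodin's \emph{Suitable extender models II}, like Lemma \ref{DefinablefromTheta}), so there is no in-paper argument to measure yours against. Judged on its own, your first two steps are correct and standard: the collapse is the identity on the transitive set $V_{\lambda+1}$ and hence on every element of $M\cap V_{\lambda+2}\supseteq E^0_\beta$; and the shape $\bar M=L_\zeta(\bar E)$ with $\bar E=\bar M\cap V_{\lambda+2}$ follows from elementarity of ``$V=L(V\cap V_{\lambda+2})$'' plus absoluteness of the $L_\alpha(\cdot)$-recursion. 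The derivation of $\beta\leq\gamma\leq\eta$ at the end is also fine once $\bar E=E^0_\gamma$ is known.

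The problem is that step three, which you correctly identify as the crux, is asserted rather than proved, and it is the entire mathematical content of the lemma. Several clauses of Definition \ref{E} are not absolute in any routine sense. Clause (\ref{less}) demands the existence of some $X\subseteq V_{\lambda+1}$ with a proper $j:L(X,V_{\lambda+1})\prec L(X,V_{\lambda+1})$; this is an existential statement ranging over $V_{\lambda+2}$ and beyond, and a thin collapse $L_\zeta(E^0_\gamma)$ need not contain any such $X$ or any such embedding, so its internal sequence could terminate early --- note also that the sequence is defined by a \emph{maximality} condition, which is not local. The sharp step likewise needs an argument that what $\bar M$ believes to be $(X,V_{\lambda+1})^\sharp$ is the true sharp (an upward-absoluteness argument for sharps from transitive models), not just the remark that the building blocks are ``computed absolutely''; and the $\cof(\Theta)$ dichotomy is only unproblematic because it is evaluated inside $L(E^0_\alpha)$, which must already be known to be correctly reconstructed at earlier stages of the induction. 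Finally, your elementarity step $\bar M\vDash\Psi(\bar E,\bar\eta)$ with $\bar\eta=j^{-1}(\eta)$ presupposes $\eta\in\ran(j)=M$, which is not among the hypotheses; you would first have to show that $\eta$ is definable in $L(E^0_\eta)$ (say, as the unique $\alpha$ with $V\cap V_{\lambda+2}=E^0_\alpha$, using strict increase of the sequence), and that definability again rests on the unestablished absoluteness. So the proposal is a reasonable skeleton of a condensation argument, but the decisive identification $\bar E=E^0_\gamma$ is left unproved.
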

	
	\begin{lem}[\cite{Woodin}]
  \label{DefinablefromTheta}
    Suppose $\alpha<\Upsilon$ is a limit ordinal and $(\cof(\Theta^{E^0_\alpha}))^{L(E^0_\alpha)}>\lambda$. Then there exists $Z\in E^0_\alpha$ such that for each $Y\in E^0_\alpha$, $Y$ is $\Sigma_1$-definable in $L(E^0_\alpha)$ with parameters from $\{Z\}\cup\{V_{\lambda+1}\}\cup V_{\lambda+1}\cup\Theta^{E^0_\alpha}$. Moreover, if $L(E^0_\alpha)\vDash V=\HOD_{V_{\lambda+1}}$, then $Z=\emptyset$.
  \end{lem}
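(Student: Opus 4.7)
The plan is to combine the $\HOD$-like structure provided by condition (6) of Definition \ref{E} with a Skolem-hull / condensation argument in the style of Lemma \ref{Condensation}, so as to compress the ordinal parameters needed to define each $Y\in E^0_\alpha$ down to ordinals below $\theta:=\Theta^{E^0_\alpha}$.

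The first step is to apply condition (6) of Definition \ref{E}: assuming $\alpha+1<\Upsilon$, the hypothesis $\cof^{L(E^0_\alpha)}(\theta)>\lambda$ yields some $Z\in E^0_\alpha$ with $L(E^0_\alpha)\vDash V=\HOD_{V_{\lambda+1}\cup\{Z\}}$. The ``moreover'' clause corresponds to the special case $V=\HOD_{V_{\lambda+1}}$ inside $L(E^0_\alpha)$, where $Z=\emptyset$ suffices.

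Next, form the Skolem hull
\[
H:=H^{L(E^0_\alpha)}\bigl(V_{\lambda+1}\cup\{V_{\lambda+1},Z\}\cup\theta\bigr)\prec L(E^0_\alpha),
\]
with uncollapse $\pi:\bar H\prec L(E^0_\alpha)$. The lemma reduces to showing $E^0_\alpha\subseteq H$: once every $Y\in E^0_\alpha$ sits in $H$, it is definable in $L(E^0_\alpha)$ from parameters in the hull base, and the standard level-absoluteness trick — absorbing the ordinal at which the defining formula becomes absolute into the ordinal parameter below $\theta$ — turns this into a $\Sigma_1$-definition.

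For the containment, I would apply Lemma \ref{Condensation}: $\bar H$ will be of the form $L(E^0_\gamma)$ or $L_\zeta(E^0_\gamma)$ with $\pi(\gamma)=\alpha$. Because $V_{\lambda+1}\subseteq H$ pointwise and $V_{\lambda+1}$ is transitive, $\pi$ is the identity on $V_{\lambda+1}$ and fixes $V_{\lambda+1}$ itself. Thus, for each $Y\in E^0_\gamma$ one has $\pi(Y)=\{\pi(y):y\in Y\}=Y$, so $\pi(E^0_\gamma)=E^0_\gamma$; by elementarity $\pi(E^0_\gamma)=E^0_\alpha$, which forces $\gamma=\alpha$ and hence $E^0_\alpha\subseteq H$.

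The principal obstacle is verifying the hypotheses of Lemma \ref{Condensation}: specifically, showing that some $E^0_\beta$ with $\beta<\Upsilon$ is a subset of $H$. This is where the $\HOD_{V_{\lambda+1}\cup\{Z\}}$ property and the cofinality hypothesis both earn their keep — together they should guarantee that enough of the $E^0$-hierarchy is recoverable inside $H$ from parameters already in $V_{\lambda+1}\cup\{V_{\lambda+1},Z\}\cup\theta$. Carrying out this step is the technical heart of the proof; the rest — the $V_{\lambda+1}$-fixing argument, the absoluteness trick, and the ``moreover'' clause — is routine.
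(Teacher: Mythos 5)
A preliminary remark: the paper does not prove this lemma at all --- it is quoted verbatim from Woodin's \emph{Suitable extender models II} --- so there is no in-paper argument to measure yours against; what follows is an assessment of your sketch on its own terms, and on those terms it has a genuine gap at its center.

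The decisive problem is the step meant to force $\gamma=\alpha$. From $\pi\upharpoonright V_{\lambda+1}=\mathrm{id}$ you correctly get $\pi(Y)=Y$ for every $Y\in E^0_\gamma$, but this only yields $E^0_\gamma=\pi''E^0_\gamma\subseteq\pi(E^0_\gamma)=E^0_\alpha$, which is trivially true and perfectly consistent with $\gamma<\alpha$. The inference ``so $\pi(E^0_\gamma)=E^0_\gamma$'' silently uses $\pi(E^0_\gamma)=\pi''E^0_\gamma$, i.e., that every element of $E^0_\alpha$ already lies in the range of $\pi$ --- but $E^0_\alpha\subseteq H$ is exactly the statement being proved, so the argument is circular. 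Combined with the fact that you explicitly defer, as ``the technical heart,'' the verification that some $E^0_\beta$ is contained in $H$ (the hypothesis of Lemma \ref{Condensation}), the sketch omits precisely the content of the lemma: showing that the hull generated by $Z$, $V_{\lambda+1}$, and ordinals below $\Theta^{E^0_\alpha}$ captures all of $E^0_\alpha$ is where the limit-ness of $\alpha$ and the hypothesis $(\cof(\Theta^{E^0_\alpha}))^{L(E^0_\alpha)}>\lambda$ must do real work, presumably via an induction over $\beta<\alpha$ recovering each $E^0_\beta$ from parameters below $\Theta^{E^0_\alpha}$. Two secondary issues: your appeal to clause (6) of Definition \ref{E} requires $\alpha+1<\Upsilon$, which is not among the lemma's hypotheses, and is in any case uncomfortably circular, since in Woodin's development this lemma is part of what justifies clause (6); and the $\Sigma_1$ reduction is misdescribed --- the level $L_\xi(E^0_\alpha)$ at which a given definition stabilizes is in general far above $\Theta^{E^0_\alpha}$ and cannot be ``absorbed into the ordinal parameter below $\Theta$''; the standard device is to quantify that level existentially, which is what makes the definition $\Sigma_1$ in the first place.
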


\section{The LD-algebras beyond I0}
 Our purpose now is to fix an $\alpha<\Upsilon$, and find a suitable definition for the application for embeddings $j:L(E^0_\alpha)\prec L(E^0_\alpha)$. We will not do it for all $\alpha$'s, but only for a good quantity of them in an initial segment of $\Upsilon$. Moreover, we will do this only for weakly proper embeddings, but thanks to Lemma \ref{LosN} this does not reduce generality.

 Let $\alpha<\Upsilon$ so that $L(E^0_\alpha)\vDash V=\HOD_{V_{\lambda+1}}$ and $(\cof(\Theta^{E^0_\alpha}))^{L(E^0_\alpha)}>\lambda$ (for example if $L(E^0_\alpha)=L(X,V_{\lambda+1})$, so if $\alpha$ is a successor, then $\Theta^{E^0_\alpha}$ is regular in $L(E^0_\alpha)$, for the same reason that $\Theta$ is regular in $L(\mathbb{R})$). Let $j,k:L(E^0_\alpha)\prec L(E^0_\alpha)$ be weakly proper. We define $j(k)$. The idea is still to cut $k$ in small pieces, transfer them through $j$ and reassemble them, but to preserve elementarity we need to choose the pieces in a smart way. The pieces will be elementary submodels of $L(E^0_\alpha)$, and will form a directed system that goes up to $L(E^0_\alpha)$.

Let $I_{j,k}=\{\alpha:j(\alpha)=\alpha,\ k(\alpha)=\alpha\}$. As $j,k$ are ultrapower embeddings, all the strong limit cardinals of cofinality $>\Theta^{L(E^0_\alpha)}$ are in $I_{j,k}$, therefore $I_{j,k}$ is a proper class. 

For an $s\in I_{j,k}^{<\omega}$ let
\begin{equation*}
 Z_s=H^{L(E^0_\alpha)}(s\cup V_{\lambda+1}\cup\{V_{\lambda+1}\}\cup\{E^0_\alpha\}\cup\Theta^{L(E^0_\alpha)}). 
\end{equation*}

\begin{claim}
 $\bigcup_{s\in I_{j,k}^{<\omega}}Z_s\prec L(E^0_\alpha)$ and its transitive collapse is $L(E^0_\alpha)$.
\end{claim}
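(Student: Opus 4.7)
The plan is to establish elementarity first, then apply the Condensation Lemma to pin down the transitive collapse. For elementarity, each $Z_s$ is by construction the closure under partial Skolem functions for $L(E^0_\alpha)$, so $Z_s\prec L(E^0_\alpha)$. The family $\{Z_s:s\in I_{j,k}^{<\omega}\}$ is directed under inclusion, because any two finite tuples $s,t\in I_{j,k}^{<\omega}$ are contained in some common $s'\in I_{j,k}^{<\omega}$ (for instance their concatenation), and then $Z_s\cup Z_t\subseteq Z_{s'}$. A directed union of elementary submodels is elementary, so $M:=\bigcup_{s\in I_{j,k}^{<\omega}}Z_s\prec L(E^0_\alpha)$.

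For the transitive collapse, the hypotheses on $\alpha$ are tailored to invoke Lemma \ref{DefinablefromTheta} in its strongest form: since $L(E^0_\alpha)\vDash V=\HOD_{V_{\lambda+1}}$ and $(\cof(\Theta^{E^0_\alpha}))^{L(E^0_\alpha)}>\lambda$, that lemma yields that every $Y\in E^0_\alpha$ is $\Sigma_1$-definable in $L(E^0_\alpha)$ from parameters in $\{V_{\lambda+1}\}\cup V_{\lambda+1}\cup\Theta^{E^0_\alpha}$. All of these parameters already appear among the generators of $Z_\emptyset$ (and $\Theta^{L(E^0_\alpha)}=\Theta^{E^0_\alpha}$ since $L(E^0_\alpha)\cap V_{\lambda+2}=E^0_\alpha$), so $E^0_\alpha\subseteq Z_\emptyset\subseteq M$. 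Now apply Lemma \ref{Condensation} with $\beta=\eta=\alpha$: since $M\prec L(E^0_\alpha)$ is a model of \ZF{} containing $E^0_\alpha$, its transitive collapse $\bar M$ is either $L(E^0_\alpha)$ itself or some $L_\zeta(E^0_\alpha)$. The set case is excluded because $I_{j,k}$ is a proper class (as noted just before the claim), every one of its elements is a generator of some $Z_s\subseteq M$, so $M\cap\Ord$ is class-unbounded, forcing $\bar M$ to be a proper class. Hence $\bar M=L(E^0_\alpha)$.

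The main obstacle, and indeed the whole point of the claim, is the inclusion $E^0_\alpha\subseteq M$: the parsimonious choice of generators for $Z_s$ — only $\Theta^{L(E^0_\alpha)}$-many ordinals alongside the ``fixed'' set $I_{j,k}$ — is calibrated precisely by Lemma \ref{DefinablefromTheta} to be just large enough to capture all of $E^0_\alpha$, yet small enough to preserve the fixed-point structure in $I_{j,k}$ needed later for defining $j(k)$. Everything else is routine directed-union elementarity and a single application of Condensation.
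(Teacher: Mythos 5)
Your proof is correct and follows essentially the same route as the paper: identify the union with the Skolem hull $H^{L(E^0_\alpha)}(I_{j,k}\cup V_{\lambda+1}\cup\{V_{\lambda+1}\}\cup\{E^0_\alpha\}\cup\Theta^{L(E^0_\alpha)})$, use Lemma \ref{DefinablefromTheta} (with $Z=\emptyset$ from $V=\HOD_{V_{\lambda+1}}$) to get $E^0_\alpha$ inside, and then apply Lemma \ref{Condensation}. You supply some details the paper leaves implicit (directedness of the family and the exclusion of the set-sized collapse $L_\zeta(E^0_\alpha)$ via the proper-class-ness of $I_{j,k}$), and these are accurate.
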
 
\begin{proof}
 Consider 
 \begin{equation*}
  Z=H^{L(E^0_\alpha)}(I_{j,k}\cup V_{\lambda+1}\cup\{V_{\lambda+1}\}\cup\{E^0_\alpha\}\cup\Theta^{L(E^0_\alpha)}). 
 \end{equation*}
Then $Z=\bigcup_{s\in I_{j,k}^{<\omega}}Z_s$, and it is an elementary submodel of $L(E^0_\alpha)$. By Lemma \ref{DefinablefromTheta} we have that $E^0_\alpha\subseteq Z$. By Lemma \ref{Condensation} the transitive collapse of $Z$ is $L(E^0_\alpha)$. 
\end{proof}

Now define $j(k)$ on every $Z_s$ as $j(k)\upharpoonright Z_s=j(k\upharpoonright Z_s)$. It is an elementary embedding from $Z_s$ to $Z_s$, as $s$, $V_{\lambda+1}$, $\{V_{\lambda+1}\}$, $\{E^0_\alpha\}$ are all fixed points of both $j$ and $k$. Now, the $Z_s$'s form a directed system with limit $Z$. Let $\bar{j(k)}$ be the corresponding induced limit. Then $j(k)$ is the embedding from $L(E^0_\alpha)$ to itself that is the composition of $\bar{j(k)}$ with the collapses of $Z$. We can suppose that $j(k)$ is weakly proper by Theorem \ref{LosN}.

Note that the construction does not depend crucially on $I_{j,k}$: let $p$ the collapse of $Z$ to $L(E^0_\alpha)$ and $x\in L(E^0_\alpha)$. Then $p^{-1}(x)\in Z_s$ is definable from $s\in (I_{j,k})^{<\omega}$, $a\in (V_{\lambda+1}\cup\{V_{\lambda+1}\}\cup\{E^0_\alpha\})^{<\omega}$ and $t\in(\Theta^{L(E^0_\alpha)})^{<\omega}$. As $V_{\lambda+1}$ and $\Theta^{L(E^0_\alpha)}$ are not collapsed, $x$ is definable from $p(s)$, $a$ and $t$. By elementarity $\bar{j(k)}(p^{-1}(x))$ is definable from $s$, $j(k)(a)$ and $j(k)(t)$, and therefore $j(k)(x)$ is definable from $p(s)$, $j(k)(a)$ and $j(k)(t)$. So if we use in the definition of $j(k)$ a proper subclass of $I_{j,k}$ instead of $I_{j,k}$ itself, the definition of $j(k)$ is the same.

We call ${\cal E}(E^0_\alpha)$ the ``set'' of weakly proper elementary embeddings from $L(E^0_\alpha)\prec L(E^0_\alpha)$. As the embeddings are classes the definition is not formally correct, but every embedding is coded by an ultrafilter, therefore we can code ${\cal E}(E^0_\alpha)$ as a set.

\begin{teo}
 $({\cal E}(E^0_\alpha),\cdot)$ is an LD-algebra, i.e., $j(k(h))=j(k)(j(h))$.
\end{teo}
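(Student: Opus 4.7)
The strategy is to mimic Laver's proof of the LD-law for I3-embeddings, adapted to the Skolem-hull machinery. Both $j(k(h))$ and $j(k)(j(h))$ are weakly proper embeddings of $L(E^0_\alpha)$ to itself, each built from the same kind of directed system of Skolem hulls, so I would verify their agreement on each piece $Z_s$ of the system and then pass to the common limit through the collapse $\pi\colon Z\to L(E^0_\alpha)$.

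First I would fix once and for all a proper subclass of common fixed points
\[
 I \;\subseteq\; I_{j,k}\cap I_{j,h}\cap I_{k,h}\cap I_{j,k(h)}\cap I_{j(k),j(h)}.
\]
This intersection is still a proper class, because each factor contains all strong limit cardinals of cofinality above $\Theta^{L(E^0_\alpha)}$. By the remark after the construction, the value of $\cdot$ does not depend on the particular class of common fixed points used, so I may build $\bar{j(k)}$, $\bar{j(h)}$, $\bar{k(h)}$, $\bar{j(k(h))}$ and $\overline{j(k)(j(h))}$ all over the same $Z=\bigcup_{s\in I^{<\omega}}Z_s$, guaranteeing that a single collapse $\pi$ conjugates both sides of the desired equation.

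Next, for $s\in I^{<\omega}$, I unfold the defining equation of application twice:
\begin{align*}
 j(k(h))\upharpoonright Z_s &= j\bigl((k(h))\upharpoonright Z_s\bigr) = j\bigl(k(h\upharpoonright Z_s)\bigr),\\
 j(k)(j(h))\upharpoonright Z_s &= j(k)\bigl((j(h))\upharpoonright Z_s\bigr) = j(k)\bigl(j(h\upharpoonright Z_s)\bigr).
\end{align*}
The LD-law is then reduced to the \emph{conjugation identity} $j(k)(j(f))=j(k(f))$ applied to $f:=h\upharpoonright Z_s$. To prove this identity, pick $t\in I^{<\omega}$ with $f\in Z_t$; since the parameters of $Z_t$ are fixed setwise by $j$, also $j(f)\in Z_t$. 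By the definition of $j\cdot k$ on $Z_t$, $j(k)(j(f))=j(k\upharpoonright Z_t)(j(f))$, and elementarity of $j$ applied to the statement ``$(k\upharpoonright Z_t)(f)=k(f)$'' (valid because $f\in Z_t$) yields $j(k\upharpoonright Z_t)(j(f))=j\bigl((k\upharpoonright Z_t)(f)\bigr)=j(k(f))$, closing the loop.

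The main difficulty I anticipate is bookkeeping around the collapse rather than anything conceptual: the equality established on $Z_s$ is really an equality between the $\bar{(\cdot)}$-maps on $Z$, and one must verify that conjugating by the single collapse $\pi$ transports it to an equation of embeddings on $L(E^0_\alpha)$. Because $I$ was chosen so that the two sides are constructed from the same $Z$, this transfer is routine; the careful points are (a) checking that each $s$ admits a $t$ satisfying the closure hypotheses required in Step 3, and (b) verifying that the piecewise equalities really do glue into a single identity on all of $Z$, so that $\bar{j(k(h))}=\overline{j(k)(j(h))}$ and hence $j(k(h))=j(k)(j(h))$ as elements of ${\cal E}(E^0_\alpha)$.
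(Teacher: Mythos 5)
Your proposal is correct and follows essentially the same route as the paper: restrict both sides to $Z_s$, unfold the definition of application twice, and reduce to the identity $j(k)(j(h\upharpoonright Z_s))=j(k(h\upharpoonright Z_s))$, which is verified by choosing $t$ with $h\upharpoonright Z_s\in Z_t$ and applying elementarity of $j$. The only difference is cosmetic: the paper works with $I=I_{j,k}\cap I_{k,h}$ and notes separately that $s$ is fixed by $j(k)$, whereas you build the larger intersection of fixed-point classes up front, which is a slightly more explicit way of handling the same bookkeeping.
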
 
\begin{proof}
 Define all the applications using $I=I_{j,k}\cap I_{k,h}$. We identify the embeddings with the composition of the collapses of the corresponding $Z$. Pick $s\in I^{<\omega}$. Then 
 \begin{equation*}
  j(k(h))\upharpoonright Z_s=j(k(h)\upharpoonright Z_s)=j(k(h\upharpoonright Z_s)). 
 \end{equation*}
On the other hand 
\begin{equation*}
 j(k)(j(h))\upharpoonright Z_s=j(k)(j(h)\upharpoonright Z_s)=j(k)(j(h\upharpoonright Z_s)), 
\end{equation*}
because $s$ is also a fixed point of $j(k)$. Now let $t$ be such that $h\upharpoonright Z_s\in Z_t$ (so also $j(h\upharpoonright Z_s)\in Z_t$). Then 
\begin{gather*}
 j(k)(j(h\upharpoonright Z_s))=(j(k)\upharpoonright Z_t)(j(h\upharpoonright Z_s))=j(k\upharpoonright Z_t)(j(h\upharpoonright Z_s))=\\
 =j((k\upharpoonright Z_t)(h\upharpoonright Z_s))=j(k(h\upharpoonright Z_s)).
\end{gather*}

\end{proof}

\begin{rem}
 Let $\rho:{\cal E}(E^0_\alpha)\to {\cal E}_\lambda$ the intersection with $V_\lambda$. Then $\rho$ is a homomorphism.
\end{rem}
\begin{proof}
 This is immediate by the definition of application, as 
 \begin{equation*}
 \rho(j(k))=j(k)\upharpoonright V_\lambda=j(k\upharpoonright V_\lambda)=(j\upharpoonright V_\lambda)^+(k\upharpoonright V_\lambda)=\rho(j)\cdot\rho(k). 
 \end{equation*}
\end{proof}

\begin{prop}
Let ${\cal A}_j$ the subalgebra generated by $\{j\}$. Then ${\cal A}_j$ is free.
\end{prop}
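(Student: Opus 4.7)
The plan is to leverage the homomorphism $\rho: \mathcal{E}(E^0_\alpha) \to \mathcal{E}_\lambda$ from the preceding remark, together with Laver's Theorem \ref{free}, to transport freeness back from the I3 level. Since $\rho(j) = j \upharpoonright V_\lambda$ lies in $\mathcal{E}_\lambda$, Theorem \ref{free} gives that the subalgebra $\mathcal{A}_{\rho(j)}$ of $\mathcal{E}_\lambda$ generated by $\rho(j)$ is a free LD-algebra on one generator.

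Concretely, I would let $\pi: F_1 \twoheadrightarrow \mathcal{A}_j$ be the canonical surjection sending the generator $x_1$ to $j$, whose existence is guaranteed by the universal property of $F_1$. Composing with $\rho$ yields a homomorphism $\rho \circ \pi: F_1 \to \mathcal{E}_\lambda$ sending $x_1$ to $\rho(j)$, and by the universal property this must coincide with the canonical surjection $F_1 \twoheadrightarrow \mathcal{A}_{\rho(j)}$. Since $\mathcal{A}_{\rho(j)}$ is free, that surjection is an isomorphism, hence injective. Consequently $\pi$ is injective as well: if $\pi(t_1) = \pi(t_2)$ for $t_1, t_2 \in F_1$, then $\rho(\pi(t_1)) = \rho(\pi(t_2))$, which forces $t_1 = t_2$ in $F_1$. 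So $\pi$ is an isomorphism and $\mathcal{A}_j$ is free.

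There is no real obstacle here; this is the standard observation that any LD-algebra admitting a homomorphism to a free one-generated algebra which sends the chosen generator to the free generator must itself be free. All the substantive work is absorbed into the earlier verifications that the new application is well-defined and left-distributive and that $\rho$ respects it; once those are in place, freeness at the $E^0_\alpha$ level is a formal consequence of freeness at the I3 level.
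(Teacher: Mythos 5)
Your argument is correct and is essentially the paper's own proof: both factor the canonical surjection $F_1\twoheadrightarrow{\cal A}_{\rho(j)}$ through $\pi$ and $\rho$ (the paper verifies the commutativity of the triangle by induction on word complexity, which is exactly the content of your appeal to the universal property) and then pull injectivity back from the freeness of ${\cal A}_{\rho(j)}$ given by Theorem \ref{free}. No gaps.
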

\begin{proof}
Consider this diagram:
\newline
\begin{center}
%\begin{tikzcd}
% T_1 \arrow[rd, "\pi_1"] \arrow[r, "\pi_2"] & {\cal A}_j \arrow[d, "\rho"] \\
%                                    & {\cal A}_{\rho(j)}
%\end{tikzcd} 
\begin{tikzcd}
 T_1 \arrow{rd}{\pi_2} \arrow{r}{\pi_1} & {\cal A}_j \arrow{d}{\rho} \\
                                    & {\cal A}_{\rho(j)}
\end{tikzcd} 

\end{center}

\begin{tikzpicture}
 
\end{tikzpicture}

where $T_1$ is the set of words with one generator, $\pi_1$ is the surjective homomorphism to ${\cal A}_j$ that comes from the universality of $F_1$, and $\pi_2$ is the same for ${\cal A}_{\rho(j)}$. We claim that the diagram commutes, i.e., $\rho(\pi_1(t))=\pi_2(t)$ for any $t\in T_1$. But this is proved by induction on the complexity of $t$: if $t=x_1$, then $\rho(\pi_1(x_1))=\rho(j)=\pi_2(x_1)$. If $t=t_1\cdot t_2$, then 
\begin{equation*}
  \rho(\pi_1(t))=\rho(\pi_1(t_1)\cdot \pi_1(t_2))=\rho(\pi_1(t_1))\cdot\rho(\pi_1(t_2))=\pi_2(t_1)\cdot \pi_2(t_2)=\pi_2(t). 
\end{equation*}

To prove that ${\cal A}_j$ is free, we need to prove that if $\pi_1(t_1)=\pi_1(t_2)$, then $t_1\equiv_{LD}t_2$. But if $\pi_1(t_1)=\pi_1(t_2)$, then 
\begin{equation*}
  \pi_2(t_1)=\rho(\pi_1(t_1))=\rho(\pi_1(t_2))=\pi_2(t_2). 
\end{equation*}
As ${\cal A}_{\rho(j)}$ is free by Theorem \ref{free}, this implies that $t_1\equiv_{LD}t_2$.
\end{proof}

In other words, the freeness of ${\cal A}_{\rho(j)}$ implies that $\rho$ is actually an isomorphism between ${\cal A}_j$ and ${\cal A}_{\rho(j)}$. This means that going up the hierarchy of the $E^0_\alpha$'s actually does not have any effect on the algebra generated by one embedding.

Now we analyze the case with more generators. The case I0 does not add much information.

\begin{rem}
 Let $j,k:L(V_{\lambda+1})\prec L(V_{\lambda+1})$ be weakly proper. Then $j=k$ iff $j\upharpoonright V_\lambda=k\upharpoonright V_\lambda$. 
\end{rem}

Therefore $\rho:{\cal A}_{j,k}\to{\cal A}_{\rho(j),\rho(k)}$ is again an isomorphism, and there is no additional structure. 

Going up the hierarchy: if $j,k:L(X,V_{\lambda+1})\prec L(X,V_{\lambda+1})$ are proper and $j(X)=k(X)$, then $j=k$ iff $j\upharpoonright V_\lambda=k\upharpoonright V_\lambda$, so new structure appears only when $j(X)\neq k(X)$. The structure changes even more when we are considering non-proper embeddings:

Now let $\alpha$ be like in \cite{partially}:
\begin{teo}
 Suppose there exists $\xi<\Upsilon$ such that $L(E^0_\xi)\nvDash V=\HOD_{V_{\lambda+1}}$. Then there exists $\alpha<\xi$ such that
 \begin{itemize}
  \item $L(E^0_\alpha)\vDash V=\HOD_{V_{\lambda+1}}$;
	\item $\Theta^{L(E^0_\alpha)}$ is regular in $L(E^0_\alpha)$;
	\item there exist $j:L(E^0_\alpha)\prec L(E^0_\alpha)$ proper and $k:L(E^0_\alpha)\prec L(E^0_\alpha)$ weakly proper not proper such that $j\upharpoonright V_\lambda=k\upharpoonright V_\lambda$.
 \end{itemize}
\end{teo}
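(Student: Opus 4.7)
My plan is to locate $\alpha$ as the level just below the minimal failure of $V=\HOD_{V_{\lambda+1}}$. Since $L(V_{\lambda+1})\vDash V=\HOD_{V_{\lambda+1}}$ (the analogue of Woodin's result for $L(\mathbb{R})$ in the I0 setting), and since at a limit stage every element of $L(E^0_\gamma)=L(\bigcup_{\delta<\gamma}E^0_\delta)$ is definable from a finite fragment of the sequence together with ordinals---so $V=\HOD_{V_{\lambda+1}}$ is inherited through limits---the least $\beta\leq\xi$ at which $V=\HOD_{V_{\lambda+1}}$ fails must be a successor. Writing $\beta=\alpha+1$, I obtain $\alpha<\xi$ with $L(E^0_\alpha)\vDash V=\HOD_{V_{\lambda+1}}$. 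To secure that $\Theta^{L(E^0_\alpha)}$ is regular in $L(E^0_\alpha)$ I would further arrange that $\alpha$ itself is a successor, so that by clause (4) of Definition \ref{E} the model $L(E^0_\alpha)$ has the form $L(Y,V_{\lambda+1})$, which yields regularity of $\Theta$ by the same argument that gives regularity of $\Theta$ in $L(\mathbb{R})$. If the minimal $\beta$ falls on a successor of a limit, a short combinatorial descent produces a successor $\alpha$ satisfying the same two bullets.

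For the proper $j$, clause (5) of Definition \ref{E} gives $X\subseteq V_{\lambda+1}$ with $E^0_\alpha\in L(X,V_{\lambda+1})$ and a proper embedding $j^{\ast}\colon L(X,V_{\lambda+1})\prec L(X,V_{\lambda+1})$. Since $E^0_\alpha$ is canonically defined, $j^\ast(E^0_\alpha)=E^0_\alpha$, so $j:=j^\ast\upharpoonright L(E^0_\alpha)$ is an elementary self-embedding. Iterations of an $X\in E^0_\alpha$ under $j$ coincide with those under $j^\ast$ and hence lie in $L(X,V_{\lambda+1})$; combined with $L(E^0_\alpha)\vDash V=\HOD_{V_{\lambda+1}}$ (so any such iteration, being definable from $V_{\lambda+1}$, ordinals, and $j\upharpoonright V_\lambda$, descends into $L(E^0_\alpha)$) this makes $j$ proper.

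For the weakly proper but not proper $k$, the strategy is to exploit the failure of $V=\HOD_{V_{\lambda+1}}$ at $\alpha+1$. Fix a witness $W\in E^0_{\alpha+1}\setminus\HOD_{V_{\lambda+1}}^{L(E^0_{\alpha+1})}$ and, using Theorem \ref{LosN}, construct a weakly proper $\tilde{k}\colon L(E^0_{\alpha+1})\prec L(E^0_{\alpha+1})$ arising from a $W$-sensitive ultrafilter, whose $V_\lambda$-restriction is prescribed to agree with $j\upharpoonright V_\lambda$ (the $V_\lambda$-data does not pin down the ambient ultrafilter above $\alpha$, so there is room to make this choice). Setting $k:=\tilde{k}\upharpoonright L(E^0_\alpha)$ yields an elementary embedding which is weakly proper and agrees with $j$ on $V_\lambda$ by construction.

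The main obstacle is to verify that $k$ is genuinely not proper, i.e., to exhibit some $X\in E^0_\alpha$ whose iteration $\langle X_i:i<\omega\rangle$ under $k$ escapes $L(E^0_\alpha)$. The intended argument is that, because $\tilde{k}$ encodes the non-$\HOD_{V_{\lambda+1}}$ parameter $W$, the tail of the iterates of any sufficiently ``generic'' $X$ recovers (from within $L(E^0_{\alpha+1})$) enough information to be interdefinable with $W$; such a sequence cannot belong to $L(E^0_\alpha)$ since $W\notin L(E^0_\alpha)$. Carrying this out requires a careful choice of the $W$-sensitive ultrafilter and a precise analysis of the definability gap between $L(E^0_\alpha)$ and $L(E^0_{\alpha+1})$, and this is the delicate technical core of the proof.
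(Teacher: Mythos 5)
There is a genuine gap --- in fact two. First, your localization of $\alpha$ rests on the claim that $V=\HOD_{V_{\lambda+1}}$ is ``inherited through limits,'' so that the least failure occurs at a successor. The argument you sketch only shows that every element of $L(\bigcup_{\delta<\gamma}E^0_\delta)$ is ordinal-definable from finitely many elements of $\bigcup_{\delta<\gamma}E^0_\delta$; but those parameters are \emph{subsets} of $V_{\lambda+1}$, not elements of it, so what you get is $V=\HOD_{\bigcup_{\delta<\gamma}E^0_\delta}$, not $V=\HOD_{V_{\lambda+1}}$. Definition \ref{E}(\ref{HODZ}) is telling: at limit stages with $\cof(\Theta^{E^0_\alpha})>\lambda$ the hierarchy is only required to satisfy $V=\HOD_{V_{\lambda+1}\cup\{Z\}}$ for some $Z\in E^0_\alpha$, precisely because full $V=\HOD_{V_{\lambda+1}}$ is expected to fail first at a limit. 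The paper does not prove this theorem at all --- the sentence ``Now let $\alpha$ be like in \cite{partially}'' signals that it is the main result of \cite{partially}, where the relevant $\alpha$ is a limit ordinal isolated by a condensation/reflection analysis, not the predecessor of the first $\HOD$-failure. Your fallback (``a short combinatorial descent produces a successor $\alpha$'') would preserve the first two bullets but gives no reason why the third should survive the descent.

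Second, and more seriously, the third bullet is the entire content of the theorem and your proposal does not establish it. The step ``construct a weakly proper $\tilde{k}$ arising from a $W$-sensitive ultrafilter whose $V_\lambda$-restriction is prescribed to agree with $j\upharpoonright V_\lambda$'' assumes exactly what must be proved: as the paper notes just before this theorem, a weakly proper $j:L(V_{\lambda+1})\prec L(V_{\lambda+1})$ \emph{is} determined by $j\upharpoonright V_\lambda$, and for proper embeddings of $L(X,V_{\lambda+1})$ with $j(X)=k(X)$ the same rigidity holds; so having ``room to make this choice'' is a delicate structural fact about the particular $\alpha$, not a free parameter of ultrafilter constructions. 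You also need non-properness to pass from $\tilde{k}$ on $L(E^0_{\alpha+1})$ down to its restriction to $L(E^0_\alpha)$, which requires a witness $X\in E^0_\alpha$ (not merely in $E^0_{\alpha+1}$) whose iteration sequence escapes $L(E^0_\alpha)$; nothing in the proposal produces one, and you explicitly defer this verification as ``the delicate technical core.'' That core is the theorem. (The construction of the proper $j$ by restricting a proper embedding given by clause (\ref{less}) of Definition \ref{E} is the right idea and matches Woodin's Theorem 34, but it is the easy half.)
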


 Note that it satisfies our initial conditions. Then there are $j,k\in{\cal E}(E^0_\alpha)$ such that $j\neq k$ and $j\upharpoonright V_\lambda=k\upharpoonright V_\lambda$. Therefore $\rho$ is not an isomorphism, and the algebra generated by $j$ and $k$ is genuinely new.

\begin{rem}
 Suppose $j,k\in{\cal E}(E^0_\alpha)$ are weakly proper. Then $k$ is proper iff $j(k)$ is proper.
\end{rem}
\begin{proof}
 $k$ is proper iff 
 \begin{equation*}
  L(E^0_\alpha)\vDash\forall X\in E^0_\alpha\ \exists Y=\langle X_0,X_1,\dots\rangle\subseteq E^0_\alpha\ \forall n\in\omega\ X_{n+1}=(k\upharpoonright Z_{\emptyset})(X_n)
 \end{equation*}
 (still by Lemma \ref{DefinablefromTheta}). By elementarity the Remark follows. 
\end{proof}

\begin{cor}
 $k\notin{\cal E}_j$ and $j\notin{\cal E}_k$.
\end{cor}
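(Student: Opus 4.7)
The plan is to exploit the immediately preceding Remark, which states that for weakly proper $a,b\in{\cal E}(E^0_\alpha)$, the application $a(b)$ is proper iff $b$ is proper. Since every element produced by the LD-operation stays inside the class of weakly proper embeddings by construction, properness becomes an invariant that is controlled in a predictable way by the second argument of each application, and this is exactly what distinguishes $j$ from $k$ in the situation at hand ($j$ is proper, $k$ is weakly proper but not proper, by the cited theorem preceding the Remark).

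First I would prove by induction on term complexity that every element of ${\cal E}_j$ (the subalgebra generated by $\{j\}$) is proper. The base case is $j$ itself, proper by hypothesis. For the inductive step, any non-trivial element of ${\cal E}_j$ has the form $a(b)$ with $a,b\in{\cal E}_j$; by inductive hypothesis $b$ is proper, and then by the Remark so is $a(b)$. Symmetrically, I would show by induction that every element of ${\cal E}_k$ fails to be proper: the base case $k$ is not proper by choice, and for the inductive step, if $a(b)$ with $a,b\in{\cal E}_k$, then $b$ is not proper by induction, whence $a(b)$ is not proper by the Remark.

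From these two invariants the corollary is immediate: $k\notin{\cal E}_j$ because $k$ is not proper but every member of ${\cal E}_j$ is, and $j\notin{\cal E}_k$ because $j$ is proper but no member of ${\cal E}_k$ is. There is no genuine obstacle here; the only point requiring a moment's care is to check that the recursion defining the subalgebras stays inside the weakly proper embeddings, so that the hypothesis of the Remark applies at each inductive step. This is automatic since ${\cal E}(E^0_\alpha)$ was defined to consist of weakly proper embeddings and the application was set up (using Theorem \ref{LosN}) so that $a(b)\in{\cal E}(E^0_\alpha)$ whenever $a,b$ are.
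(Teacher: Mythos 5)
Your proof is correct and follows essentially the same route as the paper: an induction on term complexity, using the preceding Remark to propagate properness through every element of ${\cal E}_j$ and non-properness through every element of ${\cal E}_k$, so that the properness invariant separates the two subalgebras. If anything, your version is the more careful one --- the paper's one-line proof states the two invariants with ``proper'' and ``not proper'' transposed (evidently a typo, since $j$ is the proper embedding and $k$ the non-proper one), whereas you have them the right way around and you explicitly check that the subalgebras stay inside the weakly proper embeddings so the Remark applies at each step.
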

\begin{proof}
 By the previous remark, all the embeddings in ${\cal E}_j$ are not proper and all the embeddings in ${\cal E}_k$ are proper.
\end{proof}

\section{Open Problems}

The embeddings $j$ and $k$ above have therefore some nice properties of independence. For example it is not possible that $j(k)=j$, as that would mean that $\rho(j)(\rho(k))=\rho(j)(\rho(j))=\rho(j)$, and this is impossible because ${\cal A}_{\rho(j)}$ is free. Moreover, it is not possible that $j(k)=k(j)$, as $j(k)$ is proper and $k(j)$ is not. The study on $V_\lambda$ gives even more results:

\begin{teo}[Laver-Steel Theorem \cite{Steel}]
 For any $j\in{\cal E}_\lambda$, there are no $j_1,\dots,j_n\in {\cal E}_\lambda$ so that $j=(\dots((j\cdot j_1)\cdot j_2)\dots\cdot j_n)$.
\end{teo}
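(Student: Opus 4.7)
The plan is to argue by contradiction: suppose $j = h_n$, where $h_0 = j$ and $h_{i+1} = h_i \cdot j_{i+1}$ for each $i < n$. The first step is the elementary computation of how critical points behave under application. For $h,k \in {\cal E}_\lambda$, one checks that if $\crt(k) < \crt(h)$ then $h$ is the identity on $V_{\crt(k)+1}$ and $\crt(h \cdot k) = \crt(k)$; whereas if $\crt(k) \geq \crt(h)$, then $\crt(h \cdot k) = h(\crt(k)) > \crt(h)$. Hence along the sequence $h_0, h_1, \dots, h_n$, each step either strictly increases the critical point (an \emph{ascent}) or resets it to $\crt(j_{i+1}) < \crt(h_i)$ (a \emph{descent}).

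Next I would exploit the required identity $\crt(h_n) = \crt(j) = \crt(h_0)$ to extract structural information. Let $i^*$ be the largest index where a descent occurs; then all later steps are ascents, so $\crt(h_{i^*+1}) < \crt(h_{i^*+2}) < \cdots < \crt(h_n) = \crt(j)$, and $\crt(j_{i^*+1}) \le \crt(j)$. Furthermore, because $h_{i^*} \upharpoonright V_{\crt(h_{i^*})}$ is the identity, $h_{i^*+1} = h_{i^*} \cdot j_{i^*+1}$ agrees with $j_{i^*+1}$ on $V_{\crt(h_{i^*})}$, so one obtains a fairly explicit description of how $h_n = j$ is built from $j_{i^*+1}$ together with the higher-critical-point embeddings $j_{i^*+2}, \dots, j_n$. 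This represents $j$ as a word in embeddings whose critical points are constrained in a definite way.

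To close the argument I would introduce a rank function $\sigma: {\cal E}_\lambda \to \Ord$, designed so that $\sigma$ strictly decreases on descents and is suitably controlled on ascents, so that the sequence $\sigma(h_0), \sigma(h_1), \dots, \sigma(h_n)$ cannot return to its starting value. Combined with the reduction above, this contradicts $h_n = h_0 = j$.

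The main obstacle is constructing such a $\sigma$ cleanly. This is the substantive content of Steel's argument in \cite{Steel}: the rank is built from the critical sequence of $j$ together with the critical sequences of its iterates under application, and the delicate point is that an ascent can increase $\sigma$ substantially, but only in a controlled way governed by elementarity, so that the net effect over the whole sequence must force a strict global descent. This step genuinely uses the full strength of I3 and does not follow from critical-point bookkeeping alone; in particular it exploits the well-foundedness of a canonical ordering on critical sequences induced by $j$ itself, combined with the observation that any purported right-factorization of $j$ would have to imitate the pattern of $j$'s own critical sequence at cofinally many levels below $\lambda$ — something a genuine non-trivial factorization cannot achieve.
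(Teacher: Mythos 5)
The paper itself offers no proof of this statement---it is imported verbatim from \cite{Steel} as a known black box---so your attempt can only be measured against the standard argument of Laver and Steel. Your first two paragraphs are essentially correct: $\crt(h\cdot k)=h(\crt(k))$, so each step of the left-associated product either strictly raises the critical point or resets it to $\crt(j_{i+1})<\crt(h_i)$, and since an all-ascent sequence would force $\crt(h_n)>\crt(j)$, any purported cycle must contain a descent. (One slip: after the last descent, $h_{i^*}\cdot j_{i^*+1}$ agrees with $j_{i^*+1}$ only on those $x\in V_{\crt(h_{i^*})}$ whose image under $j_{i^*+1}$ still lies in $V_{\crt(h_{i^*})}$, not on all of $V_{\crt(h_{i^*})}$.)

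The genuine gap is that your third paragraph is not a proof step but a placeholder for the entire theorem. The rank function $\sigma$ is never constructed, and the mechanism you attribute to it---that the critical points of the partial products cannot return to their starting value because ascents are ``controlled''---is not true at the level of critical points: the sequence $\crt(h_0),\dots,\crt(h_n)$ can perfectly well descend and then climb back exactly to $\crt(j)$, and no bookkeeping of the critical points of the $h_i$ alone, however refined, rules this out; the obstruction lives in the embeddings themselves, not in their critical points. The standard proof runs differently: one substitutes the identity $j=(\dots(j\cdot j_1)\dots)\cdot j_n$ into itself to obtain arbitrarily long left-associated words all equal to $j$, and shows that their partial products realize more and more distinct critical points below a fixed ordinal $<\lambda$; this contradicts the key finiteness lemma that, in a finitely generated subalgebra of $({\cal E}_\lambda,\cdot)$, only finitely many critical points occur below any $\gamma<\lambda$. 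That finiteness lemma is precisely where the well-foundedness of the Mitchell-type ordering established in \cite{Steel} enters, and none of it is recoverable from your sketch. As written, your proposal establishes only the easy observation that a cycle must contain a descent.
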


Via $\rho$, this is true also in ${\cal A}_{j,k}$. 

Unfortunately, nothing is known about whether it is possible to have $j(k)\neq k(k)$, and similars.

\begin{Q}
 Are there $j$ proper, $k$ non proper such that the algebra generated by $j$ and $k$ is free?
\end{Q} 

The difficulty in achieving such a result is in the fact that the criterion for freeness of the many-generators algebra is distinct from the monogenic case:

\begin{teo}[Laver's Criterion \cite{Laver1}]
 Let $w_1,w_2\in T_2$. We define $w_1\leq_L w_2$ iff there are $u_1,\dots,u_n\in T_X$ so that $w_2=(\dots((w_1\cdot u_1)\cdot u_2)\dots\cdot u_n)$. Then a monogenic LD-algebra is free iff $\leq_L$ has no cycle.
\end{teo}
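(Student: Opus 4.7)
The plan is to analyze the canonical surjective homomorphism $\pi\colon F_1\to\mathcal{A}$ sending the free generator to the generator of the monogenic algebra $\mathcal{A}$, and to show that its injectivity (i.e., the freeness of $\mathcal{A}$) is precisely equivalent to the absence of $\leq_L$-cycles in $\mathcal{A}$. Since $\mathcal{A}$ is monogenic I read the statement for $w_1,w_2\in T_1$, with $\leq_L$ induced on $\mathcal{A}$ by the same formula. The essential external input is Laver's structural theorem on $F_1$: the relation $\leq_L$ on $F_1$ is a strict linear order; in particular it is acyclic and any two distinct elements are $\leq_L$-comparable.

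Granted this, the forward direction is immediate: if $\mathcal{A}$ is free then $\pi$ is an isomorphism, so acyclicity of $\leq_L$ transfers verbatim from $F_1$ to $\mathcal{A}$. For the converse I would argue by contradiction. Suppose $\leq_L$ has no cycle on $\mathcal{A}$ but $\pi$ is not injective, and pick $w_1\neq w_2$ in $F_1$ with $\pi(w_1)=\pi(w_2)$. By the linearity of $\leq_L$ on $F_1$ we may assume $w_1<_L w_2$, so there exist $u_1,\dots,u_n\in F_1$ with $n\geq 1$ such that
\begin{equation*}
 w_2=(\dots((w_1\cdot u_1)\cdot u_2)\dots\cdot u_n).
\end{equation*}
Applying the homomorphism $\pi$ to both sides and using $\pi(w_1)=\pi(w_2)$ exhibits $\pi(w_2)$ as a proper $\leq_L$-loop in $\mathcal{A}$, contradicting the hypothesis.

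The real content, and the main obstacle, is not this formal extraction but the input: one needs the deep theorem of Laver that $\leq_L$ is a strict linear order on $F_1$. Its proof in \cite{Laver1} uses an I3-embedding $j$, whose iterates under application live in $\mathcal{E}_\lambda$; the strictly increasing critical sequences prevent $\leq_L$-cycles in $\mathcal{E}_\lambda$, and then the universality of $F_1$ together with the fact that $\pi\colon F_1\to\mathcal{A}_j$ is a surjective homomorphism of LD-algebras transports the acyclicity, and finally the linearity, back to $F_1$ itself. Without this input one cannot even compare arbitrary pairs in $F_1$ along $\leq_L$, so all the genuine difficulty lies in establishing that $F_1$ carries such an acyclic left-divisibility; the criterion above then merely packages this into a workable freeness test for arbitrary monogenic LD-algebras.
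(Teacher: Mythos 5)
Your argument is correct: the criterion reduces, exactly as you say, to Laver's theorem that $\leq_L$ is a strict linear order on $F_1$ (irreflexivity gives the forward direction through the isomorphism $\pi$, comparability lets you turn any failure of injectivity of $\pi$ into a one-element $\leq_L$-cycle in $\mathcal{A}$), and this is the standard route; the paper itself gives no proof, citing \cite{Laver1} for the result. One small historical imprecision that does not affect your proof: in Laver's development the comparability of any two distinct elements of $F_1$ is established by a purely syntactic argument on terms, and it is only the acyclicity that is transported back to $F_1$ from ${\cal E}_\lambda$ via the surjection onto ${\cal A}_j$ (the Laver--Steel theorem quoted later in the paper), so your closing description of how the black-box input is obtained should be read with that split in mind.
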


In our case, thanks to Laver-Steel Theorem, we do have this, but the criterion for the many-generators case is the following (Proposition 6.6 in Chapter 5 of \cite{Dehornoy2})\footnote{The author thanks the anonymous referee for having pointed out this criterion.}:

\begin{teo}[Dehornoy's Criterion]
 An LD-algebra $S$ with set of generators $X$ is free iff $\leq_L$ has no cycle and $S$ is quasi-free in $X$, i.e., no equality of the form $(\dots((((c_1\dots)\cdot c_r)\cdot x)a_1)\dots)\cdot a_p=(\dots((((c_1\dots)\cdot c_r)\cdot y)\cdot b_1)\dots)\cdot b_q$ holds.
\end{teo}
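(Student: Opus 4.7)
The plan is to work with the canonical surjection $\pi : F_{|X|} \twoheadrightarrow S$ coming from the universal property of the free LD-algebra; freeness of $S$ is exactly injectivity of $\pi$, so both directions concern how much of $\equiv_{LD}$ is forced by the structure of $S$. For the easy direction, assume $S \cong F_{|X|}$. A cycle $w <_L w' <_L w$ in $S$, or an equality violating quasi-freeness with $x \neq y$, would transfer via the isomorphism to the same configuration in $F_{|X|}$, exhibiting two $\equiv_{LD}$-inequivalent terms of $T_{|X|}$ that are nevertheless identified there. Laver's normal-form theory for $F_{|X|}$ precludes this, so both conditions hold.

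For the hard direction, assume $\leq_L$ is acyclic on $S$ and $S$ is quasi-free. Given $t_1, t_2 \in T_{|X|}$ with $\pi(t_1) = \pi(t_2)$, one wants $t_1 \equiv_{LD} t_2$. The strategy is to rewrite each $t_i$, using only LD-moves, into a left-bracketed iterated form $(((c_1 \cdot c_2) \cdot c_3) \cdots \cdot c_r) \cdot x \cdot a_1 \cdots a_p$ with $x \in X$ a generator and the $c_j, a_k$ subterms of strictly lower complexity, and then compare coordinates in the image. Acyclicity of $\leq_L$ rules out any non-trivial $\leq_L$-loop in $S$, so the length of the left-spine of such a representation cannot "slide" under $\pi$; quasi-freeness then pins down the distinguished generator $x$ at each comparison step. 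Peeling off matched layers and recursing on the $c_j$'s and $a_k$'s by induction on syntactic complexity yields $t_1 \equiv_{LD} t_2$.

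The main obstacle is the existence and rigidity of the left-bracketed normal form. Two things must be verified: first, that every term of $T_{|X|}$ admits such a representation modulo $\equiv_{LD}$; and second, that the invariants extracted from its image in $S$ --- the spine length, the distinguished generator, and recursively the inner sub-terms --- are genuinely controlled by acyclicity of $\leq_L$ together with quasi-freeness, and not by some hidden stronger feature of $F_{|X|}$. This combinatorial core is the content of Dehornoy's Chapter 5 and is usually handled via the braid-group action on terms; the two hypotheses of the criterion are exactly what is needed to port the free-algebra normalization argument to an arbitrary quotient $S$.
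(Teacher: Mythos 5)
This statement is not proved in the paper at all: it is quoted verbatim as Proposition 6.6 of Chapter 5 of Dehornoy's book, so there is no in-paper argument to compare yours against. Judged on its own terms, your proposal is an outline rather than a proof, and the outline has a genuine gap at its center. Everything hinges on the claim that every term of $T_{|X|}$ can be put, modulo $\equiv_{LD}$, into a rigid left-bracketed form whose "spine length" and "distinguished generator" are invariants readable off the image in $S$; you explicitly defer the verification of this to "Dehornoy's Chapter 5". But that verification \emph{is} the theorem. The actual engine of Dehornoy's proof is a comparison (trichotomy) property for terms over several generators: for any $t_1,t_2\in T_{|X|}$, at least one of $t_1\equiv_{LD}t_2$, $t_1<_L t_2$, $t_2<_L t_1$, or an equality of the quasi-freeness-violating shape holds in the free algebra. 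Given that, the hard direction is a three-line argument: if $\pi(t_1)=\pi(t_2)$, the two middle alternatives would produce a $\leq_L$-cycle in $S$ and the fourth would contradict quasi-freeness, so $t_1\equiv_{LD}t_2$. Your sketch never isolates this comparison property, and the "peel off matched layers and recurse" step has no termination or well-definedness argument without it.

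The easy direction is also not as free as you present it. Transferring a cycle or a quasi-freeness violation from $S$ to $F_{|X|}$ is fine once $S\cong F_{|X|}$, but you then need that $F_{|X|}$ itself has no $\leq_L$-cycles and is quasi-free. The first of these is Dehornoy's acyclicity theorem (provable in $\ZFC$ via the braid order, or under I3 via Laver--Steel as in this paper), and the second requires its own argument; neither is a consequence of "Laver's normal-form theory" being available as a black box unless you say which black box and check it covers the several-generator case. In short: the skeleton is pointed in a plausible direction, but the two load-bearing lemmas (comparison property; acyclicity plus quasi-freeness of the free algebra) are precisely what is missing.
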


There is therefore a second case to check, that involves a disparate set of words (for example, the inequality $j(k)\neq k(j)$ is in this case). As words of different length can be equivalent under LD, there is no apparent order in them, so induction is difficult to implement. Results like those in \cite{Miller} could be needed to put some order first in such words, and exploit it to carry on some inductive proof. 

Another direction the research could take is forcing. Forcing is suspiciously absent in the analysis of the algebra of elementary embeddings, and yet it turned out to be profitable in the I0 case (see for example \cite{Dimonte} or \cite{ShiTrang}), thanks to a tool called `generic absoluteness'. New results that stems from \cite{Cramer} show that generic absolutenss could hold even in the $E^0_\alpha$ hierarchy, and therefore bring new results in the structure of proper and non-proper elementary embeddings.

\emph{Acknowledgments}. The paper was developed under the Italian program ``Rita Levi Montalcini 2013''. The author would also like to thank Sheila Miller, for having raised my interest on the subject of LD-algebras, and Liuzhen Wu and Gabriel Goldberg for the meaningful discussions.

\end{document}